\DeclareMathOperator{\rank}{rank}
\DeclareMathOperator{\range}{range}
\DeclareMathOperator{\trace}{trace}
\DeclareMathOperator{\diag}{diag}\DeclareMathOperator*{\spann}{span}
\theoremstyle{definition}\newtheorem{definition}{Definition}[section]\newtheorem{example}[definition]{Example}\newtheorem{remark}[definition]{Remark}
\theoremstyle{plain}\newtheorem{theorem}[definition]{Theorem}\newtheorem{lemma}[definition]{Lemma}\newtheorem{corollary}[definition]{Corollary}\newtheorem{proposition}[definition]{Proposition}
\newcommand{\R}{\mathbb{R}}\newcommand{\C}{\mathbb{C}}\newcommand{\N}{\mathbb{N}}
\newcommand{\K}{\mathbb{K}}
\begin{document}

\title{Preconditioning filter bank decompositions using structured normalized tight frames
}
\author{Martin Ehler}
\address[M.~Ehler]{University of Vienna,
Faculty of Mathematics, 
Oskar-Morgenstern-Platz 1
A-1090 Vienna
} 
\email{martin.ehler@univie.ac.at}

\begin{abstract}
We turn a given filter bank into a filtering scheme that provides perfect reconstruction, synthesis is the adjoint of the analysis part (so-called unitary filter banks), all filters have equal norm, and the essential features of the original filter bank are preserved. Unitary filter banks providing perfect reconstruction are induced by tight generalized frames, which enable signal decomposition using a set of linear operators. If, in addition, frame elements have equal norm, then the signal energy is spread through the various filter bank channels in some uniform fashion, which is often more suitable for further signal processing. We start with a given generalized frame whose elements allow for fast matrix vector multiplication, as for instance, convolution operators, and compute a normalized tight frame, for which signal analysis and synthesis still preserve those fast algorithmic schemes.

\end{abstract}




\maketitle 

\section{Introduction}
Increasingly detailed data are acquired in all sorts of measurements nowadays, so that fast algorithms are an important factor for successful signal processing. The concept of generalized frames has a long tradition in signal processing and many unitary filter bank schemes with the perfect reconstruction property are induced by tight generalized frames. Frames themselves are basis-like systems that span a vector space but allow for linear dependency. The inherent redundancy of frames can yield advantageous features unavailable within the basis concept \cite{Christensen:2003aa,Ehler:2010aa,Ehler:2010ac,Okoudjou:2010aa}. If the frame is \emph{tight} and its elements have \emph{unit norm}, then it resembles the concept of an orthonormal basis -- with the add-on of useful redundancy -- and  frame coefficients measure the signal energy in a uniform fashion. Generalized frames were introduced in \cite{Sun:2006fk} as a tool for signal decomposition using a set of linear operators. In \cite{Casazza:2008aa}, collections of orthogonal projectors were considered under the name fusion frames, fusion frame filter banks have been considered in \cite{Chebira:2011fk}, and the concept of tight $p$-fusion frames was developed in \cite{Bachoc:2010aa}. As convolution operators are linear, most filter banks can be thought of as pairs of generalized frames, one for analysis and the other for synthesis. Hence, in view of filter banks, it is not sufficient to deal with frames but we must inevitably consider their generalized counterpart. Tightness of a generalized frame means that the induced unitary filter bank provides perfect reconstruction. As with frames, we seek unit norm tight generalized frames because signal energy is then spread through the various channels in a more uniform fashion. The latter was used in \cite{Kutyniok:2009aa} to verify robustness of tight fusion frames against erasures, meaning that it is beneficial to have tight fusion frames with equal norm when dealing with distortions and loss of data. To keep the filter bank perspective, we shall focus on generalized frames consisting of convolution operators enabling fast algorithms.  

In the present paper we start with a generalized frame, whose elements allow for fast matrix vector multiplications (for instance, convolution operators) and construct a unit norm tight generalized frame that induces a filter bank scheme  preserving those fast algorithms. The latter is related to the so-called Paulsen problem for frames, where one is given a unit norm frame, and one asks for the closest tight frame with unit norm and for an algorithm to find it. This problem for frames has been partially solved in \cite{Bodmann:2010fk,Cahill:2011ys,Casazzaa:2010fk}. Note that if we are given a unit norm generalized frame, whose elements allow for fast matrix vector multiplications, then the closest tight generalized frame with unit norm may not provide such fast algorithmic schemes in general. Here, we aim to find a related tight unit norm generalized frame in such a way that signal analysis and synthesis can still benefit from the underlying fast matrix vector multiplications. 

We should point out that we used the term filter bank beyond sets of convolution operators similar to \cite{Chebira:2011fk}, where weighted orthogonal projectors are considered. Nonetheless, if the starting generalized frame consists of convolution operators, then the resulting scheme still represents convolution operators in each channel, but we require one additional linear operator for global pre- and postmultiplication. As this operator has a special structure being the inverse of a convolution frame operator, there are still fast computation schemes available \cite{Wiesmeyr:2013fk}. 

Our construction is inspired by pseudocovariance estimators of elliptical distributions in \cite{Tyler:1987fk}, see also \cite{Kent:1988kx,Tyler:1987uq}. We derive an iterative algorithm on positive definite matrices, for which we prove convergence, so that we obtain a positive definite matrix $\Gamma$ that enables us to construct the tight unit norm generalized frame.

\smallskip
The outline is as follows: In Section \ref{section:frames}, we introduce the concept of generalized frames and motivate the construction of unit norm tight generalized frames. In Section \ref{sec:I}, we present our iterative algorithm, for which we verify convergence, enabling us to construct tight generalized frames with unit norm that preserve fast analysis and synthesis due to their special structure. In Section \ref{sec:II} we provide few examples of random matrices whose samples satisfy the convergence assumptions needed. We also point out examples for convolution operators and further operators enabling fast matrix  vector multiplications. In Section \ref{sec:III}, we discuss the structure of our construction when the underlying generalized frame is a sample from an elliptical distribution. 

\section{Generalized frames}\label{section:frames}
We follow \cite{Sun:2006fk} and call a collection $\{T_j\}_{j=1}^n\subset \K^{d\times r}$ a \emph{generalized frame} (or a g-frame for short) if there are two constants $0<A\leq B<\infty$ such that 
\begin{equation}\label{eq:fusion frame formula}
A\|x\|^2 \leq \sum_{j=1}^N\|T^*_jx\|^2  \leq B\|x\|^2, \text{ for all $x\in\K^d$.}
\end{equation} 
If the constants can be chosen $0<A=B$, then $\{T_j\}_{j=1}^n$ is called a \emph{tight g-frame} and it is called a \emph{Parseval} g-frame if $0<A=B=1$. For $r=1$, we have $T_j^*x = \langle T_j,x\rangle$, so that we recover the concept of frames, cf.~\cite{Christensen:2003aa}. It turns out that a collection $\{T_j\}_{j=1}^n\subset \K^{d\times r}$ is a g-frame if and only if $\bigcup_{j=1}^n \range(T_j)$ spans $\K^d$. 

If $\{T_j\}_{j=1}^n$ is a g-frame, then the \emph{analysis operator} $\mathcal{F}: \K^d\rightarrow \K^{n\times r}$ is given by $x\mapsto (x^*T_j)_{j=1}^n$. Its adjoint is the \emph{synthesis operator}  $\mathcal{F}^*:\K^{n\times r}\rightarrow\K^d$, defined by $ (c_j)_{j=1}^n\mapsto \sum_{j=1}^n T_j c^*_j,
$  such that the generalized frame operator is 
\begin{equation*}
S=\mathcal{F}^*\mathcal{F}: \K^d \rightarrow \K^d,\qquad x\mapsto \sum_{j=1}^n T_jT^*_j x.
\end{equation*}
The collection $\{S^{-1}T_j\}_{j=1}^n$ is called the \emph{canonical dual g-frame} and yields the expansion
\begin{equation*}
x = \sum_{j=1}^n T_j (S^{-1}T_j)^* x=\sum_{j=1}^n S^{-1}T_j T_j^*x ,\qquad \text{for all } x\in\K^d,
\end{equation*}
which simply follows from $SS^{-1}=S^{-1}S=I$.

\begin{proposition}\label{th:Parse}
If $\{T_j\}_{j=1}^\infty\subset \K^{d\times r}$ is a g-frame with g-frame operator $S$, then $\{S^{-1/2}T_j\}_{j=1}^n$ is a Parseval g-frame and, for any other Parseval g-frame $\{R_j\}_{j=1}^\infty\subset\K^{d\times r}$, we have
\begin{equation}\label{EQ:INEQ}
\sum_{j=1}^n \|T_j-S^{-1/2}T_j\|^2_{HS}\leq \sum_{j=1}^n \|T_j-R_j\|^2_{HS}. 
\end{equation}
Equality holds if and only if $R_j=S^{-1/2}T_j$, for $j=1,\ldots,n$. 
\end{proposition}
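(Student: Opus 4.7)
The plan is to separate the Parseval claim from the minimization claim, and to reduce the minimization to a single trace inequality of von Neumann type.

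First, I would verify the Parseval property directly: the g-frame operator of $\{S^{-1/2}T_j\}_{j=1}^n$ is
\[
\sum_{j=1}^n (S^{-1/2}T_j)(S^{-1/2}T_j)^* = S^{-1/2}\Bigl(\sum_{j=1}^n T_jT_j^*\Bigr)S^{-1/2} = S^{-1/2}SS^{-1/2} = I_d,
\]
which (by the identification between g-frame operators and tight g-frames) shows that $\{S^{-1/2}T_j\}_{j=1}^n$ is Parseval. In particular, writing $\widetilde T_j := S^{-1/2}T_j$, the analysis operator $V := \mathcal{F}_{\widetilde T}$ is an isometry $\K^d \to \K^{n\times r}$, and $\mathcal{F}_T = V S^{1/2}$ by construction.

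Next, for any Parseval g-frame $\{R_j\}_{j=1}^n$ I would expand the Hilbert-Schmidt norms in terms of traces:
\[
\sum_{j=1}^n \|T_j-R_j\|_{HS}^2 = \sum_{j=1}^n \trace(T_jT_j^*) - 2\Re \sum_{j=1}^n \trace(T_jR_j^*) + \sum_{j=1}^n \trace(R_jR_j^*).
\]
The first sum equals $\trace(S)$ and the third equals $\trace(I_d)=d$ because $\{R_j\}$ is Parseval, so both are independent of the choice of $\{R_j\}$. Thus minimizing the left-hand side is equivalent to maximizing $\Re \sum_j \trace(T_jR_j^*) = \Re\trace(\mathcal{F}_T^*\mathcal{F}_R)$. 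Using the factorization $\mathcal{F}_T = VS^{1/2}$, this quantity equals $\Re\trace\bigl(S^{1/2}V^*\mathcal{F}_R\bigr)$.

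The core of the argument is then to bound $W := V^*\mathcal{F}_R \in \K^{d\times d}$. Since both $V$ and $\mathcal{F}_R$ are isometries, $W^*W = \mathcal{F}_R^* VV^*\mathcal{F}_R \leq \mathcal{F}_R^*\mathcal{F}_R = I_d$, so $W$ is a contraction and each singular value of $W$ is at most $1$. Von Neumann's trace inequality then yields
\[
\Re\trace(S^{1/2}W) \leq |\trace(S^{1/2}W)| \leq \sum_{i=1}^d \sigma_i(S^{1/2})\sigma_i(W) \leq \trace(S^{1/2}),
\]
and substituting back gives the announced inequality, since the value $\trace(S^{1/2})$ is attained by $R_j = S^{-1/2}T_j$ (a direct computation: $\sum_j \trace(T_jT_j^*S^{-1/2}) = \trace(S\cdot S^{-1/2}) = \trace(S^{1/2})$).

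The step I expect to require the most care is the uniqueness of the minimizer. Here I would argue that equality in the trace bound forces $W = I_d$: since $S^{1/2}$ is positive definite with strictly positive eigenvalues, equality in von Neumann's inequality combined with $\|W\|_{op}\leq 1$ forces $W = I_d$. Then $V^*\mathcal{F}_R = I_d$ together with $V$ and $\mathcal{F}_R$ both being isometries implies $\mathcal{F}_R = V$ (apply the equality case of Cauchy-Schwarz to each vector $x\in\K^d$: $\langle Vx,\mathcal{F}_Rx\rangle = \|x\|^2 = \|Vx\|\,\|\mathcal{F}_Rx\|$ forces $\mathcal{F}_Rx = Vx$). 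Since the analysis operator determines the g-frame elements uniquely, this gives $R_j = S^{-1/2}T_j$ for every $j$, completing the proof.
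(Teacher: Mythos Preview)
Your argument is correct. The paper itself omits the proof and refers to \cite{G.-Kutyniok:2007fk} for the case $r=1$; your approach---reducing via the analysis operators to maximizing $\Re\trace(S^{1/2}W)$ over contractions $W$ and invoking the equality case---is precisely the standard argument used there, adapted to $r\times d$ blocks, so this is essentially the intended proof.
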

Most parts of the proof of Proposition \ref{th:Parse} can follow the lines in \cite{G.-Kutyniok:2007fk}, where $r=1$ is considered, so we omit the proof.


\begin{remark}
We have supposed that the linear operators of a g-frame have all the same dimensions, which simplifies notation but is not necessary. The entire paper could also deal with sets of linear operators $\{T_j\}_{j=1}^n$, where $T_j\in\K^{d\times r_j}$, for $j=1,\ldots,n$. Then $T_jT^*_j\in\K^{d\times d}$ and this is all we need. 
\end{remark}

Tight frames are desirable because synthesis is simply the adjoint of the analysis part. For signal processing purposes, we are interested in tight g-frames that additionally have unit norm, because those more resemble orthonormal bases, and $\{\|T_j^*x\|\}_{j=1}^n$ then has more information about the signal energy in the direction of a particular frame element, see \cite{Benedetto:2003aa} for $r=1$.

Given some g-frame, say with unit norm elements, let us seek a tight g-frame with equal norm elements that is nearby. If we give up the equal norm requirement and $S$ is the frame operator of some g-frame $\{T_j\}_{j=1}^n$, then the collection $\{S^{-1/2}T_j\}_{j=1}^n$ is a Parseval frame closest to $\{T_j\}_{j=1}^n$, see Proposition \ref{th:Parse}. In general, however,  $\{S^{-1/2}T_j\}_{j=1}^n$ may not have equal norms. The search for the closest Parseval frame with equal norm elements has become known as the Paulsen problem. It is essentially the same problem if we restrict us to the sphere, i.e., given a unit norm g-frame, we aim for the closest unit norm tight g-frame. For $r=1$, this problem was partially solved in \cite{Bodmann:2010fk,Cahill:2011ys,Casazzaa:2010fk}.

Suppose now that we are given a g-frame $\{T_j\}_{j=1}^n$ that allows for fast matrix vector multiplications for each $T_j$ and $T^*_j$. The closest equal norm Parseval g-frame may not preserve such features. From a computational point of view, it would be preferable to find an equal norm Parseval g-frame that still allows for fast analysis and synthesis schemes, and this is indeed our topic in the subsequent sections.

\section{Constructing unit norm tight g-frames that preserve fast algorithms }\label{sec:I}
The $g$-frame operator $S$ of some g-frame $\{T_j\}_{j=1}^n$ and hence also $S^{-1}$ are positive definite and Proposition \ref{th:Parse} yields the tight $g$-frame $\{S^{-1/2}T_j\}_{j=1}^n$ that may not have unit norm, and $\{S^{-1/2}T_j/\|S^{-1/2}T_j\|_{HS}\}_{j=1}^n$ has unit norm but may not be tight. 
To construct a unit norm tight g-frame that preserves fast matrix vector multiplications, we get inspired by Proposition \ref{th:Parse} and aim to find a positive definite matrix $\Gamma$ such that 
\begin{equation}\label{eq:tight def apporach}
\big\{\frac{\Gamma^{1/2}T_j}{\|\Gamma^{1/2}T_j\|_{HS}}\big\}_{j=1}^n
\end{equation}
is a unit norm tight g-frame. As opposed to Proposition \ref{th:Parse}, we replace $S^{-1}$ with $\Gamma$ and normalize. The unit norm $g$-frame \eqref{eq:tight def apporach} is tight if and only if 
\begin{equation}\label{eq:tightness at the beginning}
I = \Gamma^{1/2} \frac{d}{n}\sum_{j=1}^n \frac{T_jT_j^*}{\|\Gamma^{1/2}T_j\|_{HS}^2} \Gamma^{1/2}.
\end{equation}
Signal analysis and synthesis requires pre- and postmultiplication with $\Gamma^{1/2}$ but inbetween we can use $n$ times the fast algorithms provided by $T_j^*$ and $T_j$, cf.~Fig.~\ref{fig:filtering}. Now, $T_j^*\Gamma^{1/2}/\|\Gamma^{1/2}T_j\|_{HS}$ has unit norm, so that the signal energy better relates to the magnitudes of $\{ \|T_j^*\Gamma^{1/2}x\|/\|\Gamma^{1/2}T_j\|_{HS}  \}_{j=1}^n$. Thus, the special structure \eqref{eq:tight def apporach} can be advantageous over other unit norm tight $g$-frames that can be closer to the original one. 

\begin{remark}
The filtering scheme in Fig.~\ref{fig:filtering} can preserve many properties of the original g-frame $\{T_j\}_{j=1}^n$, which can go beyond fast matrix vector multiplications, such as, being orthogonal projectors, sparse matrices, etc, as long as the application of $\Gamma^{1/2}$ is implemented separately and we do not use $\Gamma^{1/2}T_j$ directly. 
\end{remark}
\begin{remark}
We point out that the structure \eqref{eq:tight def apporach} is different from the approach in \cite{Kutyniok:2013fk}, where rescalings are seeked to derive tight frames. The authors in \cite{Frank:2002uq} discuss the setting when a linear operator exists that maps a frame into a unit norm tight frame. We are more general here, because we are joining both, we apply a linear operator and allow for rescaling. 
\end{remark}

\begin{figure}
\begin{tikzpicture}[auto,>=latex']
    \tikzstyle{block} = [draw, shape=rectangle, minimum height=3em, minimum width=3em, node distance=2cm, line width=1pt]
    \tikzstyle{block2} = [draw, shape=rectangle, minimum height=3em, minimum width=3em, node distance=1.3cm, line width=1pt]
    \tikzstyle{phant} = [shape=rectangle, minimum height=3em, minimum width=3em, node distance=2cm, line width=1pt]
  \tikzstyle{phant2} = [shape=rectangle, minimum height=0em, minimum width=3em, node distance=1.2cm, line width=1pt]
    \tikzstyle{dots} = [shape=circle,minimum height=1em, minimum width=2em, node distance=1cm]
    \tikzstyle{sum} = [draw, shape=circle, node distance=1.5cm, line width=1pt, minimum width=1.25em]
    \tikzstyle{branch}=[fill,shape=circle,minimum size=4pt,inner sep=0pt]
    \node at (-1.5,0) (input) {$x$};
    \node [block] (G1) {$\sqrt{\frac{d}{n}\Gamma}$};
    \node at (-2.5,0) [block, right of=G1] (T1) {$T_1^*$};
    \node [dots, below of=T1] (D1) {$\;\;\vdots\;\;$};
     \node [block, right of=T1] (H1) {$\frac{1}{\|\Gamma^{1/2}T_1\|}$};
       \node [dots, below of=H1] (D2) {$\quad\vdots\quad$};
         \node [block, below of=H1] (Hj) {$\frac{1}{\|\Gamma^{1/2}T_j\|}$};
         \node [dots, below of=Hj] (Hn) {$\quad\vdots\quad$};
           \node [block, below of=Hj] (Hnd) {$\frac{1}{\|\Gamma^{1/2}T_n\|}$};
             \node [phant, right of=H1] (P1) {};
                         \node [phant, right of=D2] (P2) {};
                                                  \node [phant, right of=Hn] (P3) {};
                                                  			    \node [phant, right of=Hn] (Pn) {};
			    \node [phant, right of=Hnd] (Pnd) {};
      \node [block, right of=P1] (TT1) {$T_1$};
        \node [block, right of=TT1] (HH1) {$\frac{1}{\|\Gamma^{1/2}T_1\|}$};
         
                    \node [dots, below of=HH1] (HH2) {$\;\;\vdots\;\;$};
    \node [sum, right of=HH1] (sum) {};
    \node [block2, right of=sum] (G2) {$\sqrt{\frac{d}{n}\Gamma}$};

          \node [phant, right of=Hj] (Pj) {\begin{turn}{-90} processing  level\end{turn}};
    
    \node at (sum) (plus) {{\footnotesize$+$}};
     
    \node at (14.3,0) (output) {$\tilde{x}$};
    \path (G1) -- coordinate (med) (T1);
    \node [block, below of=T1] (Tj) {$T_j^*$};
      \node [dots, below of=Tj] (Tn) {$\;\;\vdots\;\;$};
       \node [block, below of=Tj] (Tnd) {$T_n^*$};    
           \node [phant2, below of=Tnd] (END1) {signal analysis};     
                  \node [dots, below of=TT1] (TT2) {$\;\;\vdots\;\;$};    
       \node [block, below of=TT1] (TTj) {$T_j$};    
          \node [dots, below of=TTj] (TTn) {$\;\;\vdots\;\;$};    
       \node [block, below of=TTj] (TTnd) {$T_n$};    
        \node [block, right of=TTj] (HHj) {$\frac{1}{\|\Gamma^{1/2}T_j\|}$};
                            \node [dots, below of=HHj] (HHn) {$\;\;\vdots\;\;$};
                                    \node [block, right of=TTnd] (HHnd) {$\frac{1}{\|\Gamma^{1/2}T_n\|}$};
                                       \node [phant2, below of=HHnd] (END2) {signal synthesis};     
    \begin{scope}[line width=1pt]
         \draw[->] (input) -- (G1);
         \draw[->] (G1) -- (T1);
           \draw[->] (Tj) -- (Hj);
                \draw[->] (D2) -- (P2);
                     \draw[->] (H1) -- (P1);
                      \draw[->] (Hn) -- (Pn);
                      \draw[->] (Hnd) -- (Pnd);                     
     \draw[->] (Hj) -- (Pj);
    \draw[->] (Tnd) -- (Hnd);
            \draw[->] (G2) -- (output);
                   \draw[->] (P1) -- (TT1);    
                            \draw[->] (TT1) -- (HH1);    
                   
                     \draw[->] (P2) -- (TT2);
                   \draw[->] (Pj) -- (TTj);    
                                      \draw[->] (Pnd) -- (TTnd);    
                                                                            \draw[->] (Pn) -- (TTn);    
                           
           \draw[->] (med) node[branch] {} |- (Tj);
               \draw[->] (med) node[branch] {} |- (D1);
             \draw[->] (med) node[branch] {} |- (Tn);
                \draw[->] (med) node[branch] {} |- (Tnd);
                 \draw[->] (T1) -- (H1);
                 
                          \draw[->] (TTj) -- (HHj);
			 \draw[->] (TTnd) -- (HHnd);  
			 	 \draw[->] (HH1) -- (sum);    
                 \draw[->] (HHj) -| (sum);
                  \draw[->] (HH2) -| (sum);
                 \draw[->] (HHn) -| (sum);
                  \draw[->] (HHnd) -| (sum);
                          \draw[->] (sum) -- (G2);   
                 
    \end{scope}
\end{tikzpicture}
\caption{Analysis and synthesis scheme, in which fast matrix vector multiplication of $T_j^*$ and $T_j$ can still be used after and before pre- and postmultiplication with $\Gamma^{1/2}$, respectively.  }\label{fig:filtering}
\end{figure}
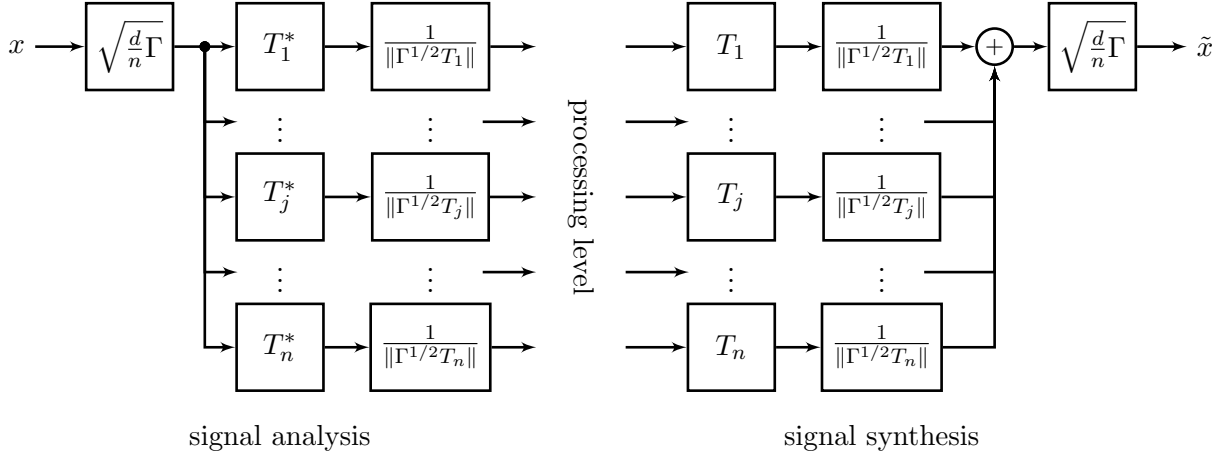

Note that \eqref{eq:tightness at the beginning}  is equivalent to 
\begin{equation}\label{eq:it motive}
\Gamma = \big(\frac{d}{n} \sum_{j=1}^n \frac{T_jT_j^*}{\trace(T_j^*\Gamma T_j)}\big)^{-1}.
\end{equation}
Thus, $\Gamma$ is the inverse of a generalized frame operator. 
Since this equation is invariant under scalings, we can look for a solution $\Gamma$ with $\trace(\Gamma)=1$. Let $\mathcal{P}$ be the collection of hermitian positive definite matrices in $\K^{d\times d}$ and denote by $\mathcal{P}_1$ the same space with the additional requirement that the trace is $1$. The fixed point equation \eqref{eq:it motive} gives rise to an iterative scheme that was already considered in \cite{Kent:1988kx,Tyler:1987fk} for $r=1$ to estimate the covariance of elliptical distributions. As initialization we choose $\Gamma_0=\frac{1}{d}I_d\in\mathcal{P}_1$ and define 
\begin{equation}\label{eq:iteration 0}
\Gamma_{k+1} := \frac{\big(\frac{d}{n} \sum_{j=1}^n \frac{T_jT_j^*}{\trace(T_j^*\Gamma_k T_j)}\big)^{-1} }{\trace\big((\frac{d}{n} \sum_{j=1}^n \frac{T_jT_j^*}{\trace(T_j^*\Gamma_k T_j)})^{-1}\big)} .
\end{equation}
Note that $\Gamma_1=S^{-1}/\trace(S^{-1})$, and to verify convergence, we shall follow the ideas of the technical  procedure used in \cite{Kent:1988kx,Tyler:1987fk} for $r=1$. For analysis purposes, we shall introduce the mapping 
\begin{equation}\label{eq:M}
M:\mathcal{P}\rightarrow \mathcal{P},\quad  M(\Gamma):=\frac{d}{n} \sum_{j=1}^n \frac{\Gamma^{1/2}T_j T^*_j\Gamma^{1/2}}{\trace(T^*_j \Gamma T_j)}, \quad\quad M_k:=M(\Gamma_k),
\end{equation}
so that
\begin{equation}\label{eq:def Gamma}
{\Gamma}_{k+1}  = \frac{{\Gamma}_k^{1/2}M_k^{-1}{\Gamma}_k^{1/2}}{\trace({\Gamma}_kM_k^{-1})},\qquad \Gamma_0:=\frac{1}{d}I_d.
\end{equation}
We shall first check that the mapping $M$ is injective up to scalings, which generalizes \cite[Theorem 2.1]{Tyler:1987fk} from $r=1$ to the general case:
\begin{lemma}\label{lemma:M}
Let $\{T_j\}_{j=1}^n\subset\K^{d\times r}$ be a $g$-frame and $\Gamma_a,\Gamma_b\in\K^d$ be positive definite. Then $M(\Gamma_a)=M(\Gamma_b)$ if and only if there is a positive constant $c>0$ such that $\Gamma_a=c\Gamma_b$. 
\end{lemma}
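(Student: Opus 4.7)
The plan is first to dispatch the easy direction. If $\Gamma_a=c\Gamma_b$ with $c>0$, then $\Gamma_a^{1/2}=\sqrt{c}\,\Gamma_b^{1/2}$ and $\trace(T_j^*\Gamma_a T_j)=c\,\trace(T_j^*\Gamma_b T_j)$, so the factor $c$ cancels in every summand of the definition of $M$ and one obtains $M(\Gamma_a)=M(\Gamma_b)$. This also shows that $M$ is homogeneous of degree zero, a fact I will use below.

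For the nontrivial direction I plan to adapt the argument of \cite[Thm.~2.1]{Tyler:1987fk} for the scalar $(r=1)$ case. Write $M(\Gamma)=\Gamma^{1/2}N(\Gamma)\Gamma^{1/2}$, where $N(\Gamma):=\frac{d}{n}\sum_{j=1}^n T_jT_j^*/\phi_j(\Gamma)$ and $\phi_j(\Gamma):=\trace(T_j^*\Gamma T_j)$; note $N(\Gamma)$ is positive definite by the g-frame property. Form the positive definite auxiliary matrix $C:=\Gamma_b^{-1/2}\Gamma_a\Gamma_b^{-1/2}$; the claim reduces to showing that $C$ is a positive scalar multiple of $I$. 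Multiplying the hypothesis $M(\Gamma_a)=M(\Gamma_b)$ by $\Gamma_b^{-1/2}$ on both sides turns it into the congruence identity $Q^*N(\Gamma_a)Q=N(\Gamma_b)$, where $Q:=\Gamma_a^{1/2}\Gamma_b^{-1/2}$ satisfies $Q^*Q=C$. Using the degree-zero homogeneity of $M$, rescale $\Gamma_a$ so that $\lambda_{\max}(C)=1$; equivalently, $\Gamma_a\le\Gamma_b$ in the Loewner order and equality is realized on some unit eigenvector $u$ of $C$ with $Cu=u$. The task becomes to conclude $C=I$.

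The main step combines two monotonicity facts with the congruence identity. From $\Gamma_a\le\Gamma_b$ one obtains $\phi_j(\Gamma_a)\le\phi_j(\Gamma_b)$ for every $j$, hence $N(\Gamma_a)\ge N(\Gamma_b)$, and taking the trace of both sides of the congruence identity gives $\trace N(\Gamma_a)\ge\trace N(\Gamma_b)=\trace(C' N(\Gamma_a))$, where $C':=QQ^*$ has the same spectrum as $C$ and therefore $C'\le I$. Testing the original identity $M(\Gamma_a)=M(\Gamma_b)$ against the vector $w:=\Gamma_b^{-1/2}u$, for which $(\Gamma_b-\Gamma_a)w=0$, together with a parallel analysis based on a unit eigenvector of $C$ for the smallest eigenvalue, should squeeze $C$ to the identity; unwinding the rescaling then yields $\Gamma_a=c\Gamma_b$ with $c>0$.

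I expect the main obstacle to be the non-commutativity of $\Gamma_a^{1/2}$ and $\Gamma_b^{1/2}$, which prevents any simultaneous diagonalization of $\Gamma_a$ and $\Gamma_b$ and forces all comparisons to pass through traces or Rayleigh quotients evaluated against well-chosen eigenvectors of $C$. A secondary subtlety is that the g-frame condition $\bigcup_j\range(T_j)=\K^d$ is strictly weaker than saying $\spann\{T_jT_j^*\}_{j=1}^n$ exhausts the Hermitian matrices, so the conclusion $\Gamma_a=c\Gamma_b$ cannot be drawn from the scalar equalities $\phi_j(\Gamma_a)=c\phi_j(\Gamma_b)$ alone; the full matrix congruence $Q^*N(\Gamma_a)Q=N(\Gamma_b)$ together with the strict positivity of $N$ furnished by the g-frame property must be exploited to close the argument.
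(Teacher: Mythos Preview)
Your easy direction is fine, but the hard direction has a genuine gap. The trace inequality you extract, $\trace N(\Gamma_a)\ge\trace(C'N(\Gamma_a))$, is automatic from $C'\le I$ and $N(\Gamma_a)\succ 0$ and therefore carries no information about $C$. Testing $M(\Gamma_a)=M(\Gamma_b)$ against $w=\Gamma_b^{-1/2}u$ gives only $z^*N(\Gamma_a)z=u^*N(\Gamma_b)u$ with $z:=Qu$ still a \emph{unit} vector (because $\|z\|^2=u^*Cu=1$); combined with $N(\Gamma_a)\ge N(\Gamma_b)$ this yields $u^*N(\Gamma_a)u\ge z^*N(\Gamma_a)z$ for two unit vectors, which is no constraint on $C$. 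The ``parallel analysis for the smallest eigenvalue'' is left unspecified, and I do not see any mechanism by which it would squeeze $C$ to the identity.

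The paper removes the non-commutativity obstacle you flag by a substitution rather than by working around it: replacing $\{T_j\}$ with the g-frame $\{\Gamma_b^{-1/2}T_j\}$ reduces to $\Gamma_b=I$. Then, writing $\Gamma_a=\sum_i\gamma_iP_i$ with $\gamma_1$ the largest eigenvalue and $P_1$ its eigenprojector, one computes $\trace(P_1M(\Gamma_a))$ explicitly (now $P_1$ commutes with $\Gamma_a^{1/2}$) and obtains $\trace(P_1M(\Gamma_a))\ge\trace(P_1M(I_d))$ termwise, with equality forced by the hypothesis. Be aware, however, that under the bare g-frame hypothesis the statement actually fails: with $d=n=2$, $r=1$, $T_j=e_j$ one checks $M(\diag(2,1))=M(I)=I$. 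The termwise equality analysis only forces each $\range(T_j)$ to lie in $\range(P_1)$ or in $\range(P_1)^\perp$, which does not by itself contradict the g-frame property; a further genericity condition (of the type appearing as condition (iii) in the main convergence theorem) is what rules out such splittings. Your closing instinct that the g-frame condition alone is ``strictly weaker'' than what is needed was therefore exactly right.
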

\begin{proof}
Without loss of generality, we can assume that $\Gamma_b=I_d$. Otherwise, replace $\{T_j\}_{j=1}^n$ with $\{\Gamma_b^{-1/2}T_j\}_{j=1}^n$. Let $\gamma_1$ be the largest eigenvalue of $\Gamma_a$ and $P_1$ be the associated eigenprojector. Moreover, let $\{P_i\}_{i=2}^s$ be one-dimensional eigenprojectors of $\Gamma_a$ associated to eigenvalues $\{\gamma_i\}_{i=2}^s$ such that $\gamma_1>\gamma_i$, for $i=2,\ldots, s$, and $\sum_{i=1}^s \gamma_i P_i =\Gamma_a$. Note that $\gamma_2,\ldots,\gamma_s$ do not need to be pairwise distinct. Since $\Gamma_a^{1/2}P_1\Gamma^{1/2}_a=\gamma_1P_1$ and $\sum_{i=1}^s P_i=I_d$, we obtain
\begin{align*}
\trace(P_1M(\Gamma_a)) & = \frac{d}{n}\sum_{j=1}^n \frac{\trace(P_1 \Gamma_a^{1/2}T_j T^*_j\Gamma_a^{1/2})}{\trace(T^*_j \Gamma_a T_j)}  \\
& = \gamma_1 \frac{d}{n}\sum_{j=1}^n \frac{\trace(T_j^* P_1 T_j)}{\trace(T^*_j \Gamma_a T_j)}  \\
& = \gamma_1\frac{d}{n}\sum_{j=1}^n \frac{\trace(T_j^* P_1 T_j)}{\sum_{i=1}^s\gamma_i\trace(T^*_j P_i T_j)} \\ 
&\geq   \gamma_1 \frac{d}{n}\sum_{j=1}^n \frac{\trace(T_j^* P_1 T_j)}{\gamma_1\trace(T^*_j T_j)} = \trace(P_1 M(I_d)).
\end{align*}
Now, $M(\Gamma_a)=M(I_d)$ implies that either $\trace(T_j P_i T^*_j)=0$, for all $j=1,\ldots,n$ and $i=2,\ldots,r$ or that $P_1=I_d$ and, hence,  $\Gamma_a=\gamma_1I_d$. Since $\{T_j\}_{j=1}^n$ is a generalized frame, $\trace(T^*_j P_i T_j)$ cannot vanish simultaneously for all $j=1,\ldots,n$, so that, indeed, $\Gamma_a$ is a nonzero multiple of the identity. 
\end{proof}
The following result says that if we find a proper tight $g$-frame with unit norm based on \eqref{eq:tight def apporach}, then this tight $g$-frame is unique:
\begin{proposition}
Let $\{T_j\}_{j=1}^n$ be a $g$-frame and suppose that there are two positive definite matrices $\Gamma_a$ and $\Gamma_b$ such that both $\{\Gamma_a^{1/2}T_j/\|\Gamma_a^{1/2}T_j\|_{HS}\}_{j=1}^n$ and $\{\Gamma_b^{1/2}T_j/\|\Gamma_b^{1/2}T_j\|_{HS}\}_{j=1}^n$ are tight. Then those two tight $g$-frames are identical. 
\end{proposition}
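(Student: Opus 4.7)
The plan is to reduce the uniqueness claim to the injectivity result in Lemma \ref{lemma:M}, since that lemma already characterizes when two positive definite matrices give the same $M$.

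First, I would rewrite the tightness condition in terms of the operator $M$ defined in \eqref{eq:M}. The characterization \eqref{eq:tightness at the beginning} says that $\{\Gamma^{1/2}T_j/\|\Gamma^{1/2}T_j\|_{HS}\}_{j=1}^n$ is a unit norm tight $g$-frame precisely when
\begin{equation*}
I = \Gamma^{1/2}\,\frac{d}{n}\sum_{j=1}^n \frac{T_j T_j^*}{\|\Gamma^{1/2}T_j\|_{HS}^2}\,\Gamma^{1/2}.
\end{equation*}
Using $\|\Gamma^{1/2}T_j\|_{HS}^2 = \trace(T_j^*\Gamma T_j)$, this is exactly $M(\Gamma) = I$. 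So the hypothesis becomes $M(\Gamma_a) = I = M(\Gamma_b)$.

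Next, apply Lemma \ref{lemma:M} to conclude that there is a positive scalar $c>0$ such that $\Gamma_a = c\,\Gamma_b$. The final step is to check that rescaling $\Gamma$ by a positive constant leaves the normalized frame elements unchanged: since $\Gamma_a^{1/2} = \sqrt{c}\,\Gamma_b^{1/2}$, the numerator picks up a factor $\sqrt{c}$ and the norm in the denominator picks up the same factor, so
\begin{equation*}
\frac{\Gamma_a^{1/2}T_j}{\|\Gamma_a^{1/2}T_j\|_{HS}} = \frac{\sqrt{c}\,\Gamma_b^{1/2}T_j}{\sqrt{c}\,\|\Gamma_b^{1/2}T_j\|_{HS}} = \frac{\Gamma_b^{1/2}T_j}{\|\Gamma_b^{1/2}T_j\|_{HS}},\qquad j=1,\ldots,n.
\end{equation*}
Hence the two tight $g$-frames coincide element by element.

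There is no genuine obstacle here once one recognizes the equivalence between the tightness requirement and $M(\Gamma)=I$; all the technical work has already been absorbed into Lemma \ref{lemma:M}, whose argument via the largest-eigenvalue projector $P_1$ of $\Gamma_a$ is what forces $\Gamma_a$ to be a positive multiple of $\Gamma_b$. Thus the proposition is essentially a corollary of that lemma together with the scale invariance of the normalization in \eqref{eq:tight def apporach}.
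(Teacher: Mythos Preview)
Your argument is correct and follows exactly the paper's approach: translate tightness into $M(\Gamma_a)=M(\Gamma_b)=I$, invoke Lemma~\ref{lemma:M} to get $\Gamma_a=c\Gamma_b$, and conclude. The only difference is that you spell out the scale-invariance step explicitly, whereas the paper leaves it implicit in the final sentence.
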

\begin{proof}
The tightness assumptions imply that $M(\Gamma_a)=M(\Gamma_b)=I$. According to Lemma \ref{lemma:M}, there is a positive constant $c$ such that $\Gamma_a=c\Gamma_b$. Therefore, the two tight $g$-frames are identical. 
\end{proof}


Next, we use the scheme \eqref{eq:def Gamma} to compute a unit norm tight  $g$-frame:
\begin{theorem}\label{theorem:Paul}
Let $\{T_j\}_{j=1}^n\subset\K^{d\times r}$ satisfy the following points:
\begin{enumerate}[(i)]
\item \label{it:1} $\{T_j\}_{j=1}^n$ is a  g-frame.
\item \label{it:2} If $\{1,\ldots,n\}=N_1\cup N_2$ and $\N_1\cap N_2=\emptyset$, then $\bigcup_{j\in N_i}\range(T_j)$ spans $\K^d$ for either $i=1$ or $i=2$.
\item \label{it:3} If $L$ is a proper linear subspace of $\K^d$, then $\#\{j: \range(T_j)\subset L\} < \frac{n}{d}$.
\item \label{it:4} If $L$ is a proper linear subspace of $\K^d$, then $\#\{j: \range(T_j)\subset L\} < \dim(L)/\alpha$, where $\alpha:=\max_{1\leq j\leq n}\trace(T_j^* S^{-1}T_j)$ and $S$ is the $g$-frame operator of $\{T_j\}_{j=1}^n$.
\end{enumerate}
%
%
%
%
Then the recursive scheme \eqref{eq:def Gamma} with $\Gamma_0=\frac{1}{d}I_d$ converges towards a positive definite $\Gamma$ and $\{R_j\}_{j=1}^n$ defined by 
\begin{equation}\label{eq:R}
R_j:=\frac{\Gamma^{1/2}T_j}{\|\Gamma^{1/2}T_j\|_{HS}}
\end{equation}
is a tight  g-frame. 
\end{theorem}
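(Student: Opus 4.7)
The plan is to adapt the Tyler--Kent fixed-point argument from the scalar case $r=1$ to general $r$ by viewing the iteration \eqref{eq:def Gamma} as a descent scheme for a scale-invariant functional on $\mathcal{P}_1$. The normalisation in \eqref{eq:def Gamma} keeps $\Gamma_k\in\mathcal{P}_1$, and by Lemma~\ref{lemma:M} any fixed point $\Gamma\in\mathcal{P}_1$ of the iteration must satisfy $M(\Gamma)=I$ and is then unique; such a $\Gamma$ immediately yields the required unit norm tight g-frame through \eqref{eq:tight def apporach}--\eqref{eq:tightness at the beginning}. The three ingredients needed are therefore (a) a Lyapunov functional $F$ for which the iteration is monotone, (b) coercivity of $F$ on $\mathcal{P}_1$ so that iterates stay in a compact subset of the interior $\mathcal{P}_1^{\circ}$, and (c) the uniqueness coming from Lemma~\ref{lemma:M}.

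I would introduce
\begin{equation*}
F(\Gamma):=\tfrac{d}{n}\sum_{j=1}^n \log\trace(T_j^{*}\Gamma T_j)-\log\det(\Gamma),\qquad \Gamma\in\mathcal{P},
\end{equation*}
which a direct calculation shows is scale-invariant, $F(c\Gamma)=F(\Gamma)$ for $c>0$, and whose critical directions in $\mathcal{P}_1$ are characterised by $M(\Gamma)\propto I$. The descent step $F(\Gamma_{k+1})\leq F(\Gamma_k)$, with equality iff $\Gamma_{k+1}=\Gamma_k$, I would prove by a majorisation--minimisation argument: the tangent line bound $\log x\leq \log x_0+(x-x_0)/x_0$ applied to $x=\trace(T_j^{*}\Gamma T_j)$ at $x_0=\trace(T_j^{*}\Gamma_k T_j)$ produces the upper bound
\begin{equation*}
F(\Gamma)\leq F(\Gamma_k)-\log\det\Gamma+\log\det\Gamma_k+\trace\bigl((\Gamma-\Gamma_k)A_k\bigr),\qquad A_k:=\tfrac{d}{n}\sum_{j=1}^n\tfrac{T_jT_j^{*}}{\trace(T_j^{*}\Gamma_k T_j)},
\end{equation*}
whose unique minimiser in $\mathcal{P}$ is $\Gamma=A_k^{-1}$, and by the scale invariance of $F$ this is the same as evaluating at $\Gamma_{k+1}$. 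Strict convexity of $-\log\det$ and strict concavity of $\log$ together yield the equality case.

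The main obstacle, and the step where conditions (ii)--(iv) actually enter, is establishing coercivity of $F$ on $\mathcal{P}_1$: every sublevel set $\{F\leq c\}\cap\mathcal{P}_1$ must be a compact subset of $\mathcal{P}_1^{\circ}$. The dangerous scenario is a sequence of iterates whose lowest eigenvalues collapse on a proper subspace $L\subset\K^d$ of dimension $\ell$. Along such a sequence, $-\log\det\Gamma$ diverges at a rate proportional to $\ell$ times $\log(\varepsilon^{-1})$, while $\tfrac{d}{n}\sum_j\log\trace(T_j^{*}\Gamma T_j)$ contributes a matching divergence only from those $j$ with $\range(T_j)\subset L$. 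Condition (iii) forces $\#\{j:\range(T_j)\subset L\}<n/d$, so that the coefficient $d/n$ cannot overwhelm the $-\log\det$ term; condition (iv) supplies the sharper bookkeeping required once one is actually tracking the iteration (the presence of $\alpha=\max_j\trace(T_j^{*}S^{-1}T_j)$ is natural because the iteration starts with $\Gamma_1=S^{-1}/\trace(S^{-1})$); and condition (ii) rules out a reducible decomposition of the index set along which the iteration could decouple into strictly smaller subsystems on complementary subspaces. Carrying out this three-way case analysis carefully is where the technical work lies.

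Combining descent with coercivity traps the sequence $\{\Gamma_k\}$ inside a compact subset of $\mathcal{P}_1^{\circ}$. Any cluster point $\Gamma^{*}$ then satisfies $F(\Gamma^{*})=\lim_k F(\Gamma_k)$, and by the strict descent property together with continuity of the iteration map, $\Gamma^{*}$ must be a fixed point, i.e.\ $M(\Gamma^{*})=I$. Lemma~\ref{lemma:M} makes such $\Gamma^{*}$ unique in $\mathcal{P}_1$, so the entire sequence converges to $\Gamma:=\Gamma^{*}$, and \eqref{eq:tightness at the beginning} shows that the $R_j$ defined by \eqref{eq:R} form a unit norm tight g-frame.
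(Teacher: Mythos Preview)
Your approach is correct in outline and genuinely different from the paper's. The paper does \emph{not} introduce a Lyapunov functional; it follows Tyler's original 1987 argument directly. In Step~1 it shows by explicit matrix algebra that the extreme eigenvalues of $M_k=M(\Gamma_k)$ are monotone (the largest decreases, the smallest increases), so both converge. In Step~2 it passes to a subsequential limit $\Gamma$ of $(\Gamma_k)$, writes down the limiting $M$ in the form \eqref{eq:M2}, and then performs a rather intricate eigenprojector analysis: premultiplying and postmultiplying \eqref{eq:1}--\eqref{eq:2} by the eigenprojectors $P,P_0$ associated to $\lambda_1$ forces, via conditions (ii) and (iii), that $\Gamma$ has full rank and $M=I$. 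Condition (iv) is used only at the very end, to produce the explicit lower bound $\lambda_{d,1}\ge d/(n\alpha)$ needed to close the contradiction with \eqref{eq:last eq}. Full-sequence convergence is then obtained from $M_k\to I$ together with a contraction argument.

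Your majorisation--minimisation route via $F(\Gamma)=\tfrac{d}{n}\sum_j\log\trace(T_j^{*}\Gamma T_j)-\log\det\Gamma$ is the more modern and more conceptual treatment of Tyler's iteration; the descent inequality and its equality case are exactly as you state. What it buys you is that the roles of the hypotheses become transparent: coercivity of $F$ on $\mathcal{P}_1$ is equivalent to the inequality $\#\{j:\range(T_j)\subset L\}<\tfrac{n}{d}\dim L$ for every proper subspace $L$, and condition (iii) already implies this (since $n/d\le n\dim L/d$). In particular, in your framework conditions (ii) and (iv) are \emph{not} needed---they are artefacts of the paper's eigenvalue-tracking proof, where (ii) is used to split the index set along $P\Gamma^{1/2}T_j\in\{0,\Gamma^{1/2}T_j\}$ and (iv) is used to bound $\lambda_{d,1}$ from below. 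Your sentence ``condition (iv) supplies the sharper bookkeeping\ldots'' is therefore misleading; you should either drop it or state precisely where (iv) would enter your argument (it does not). Once you make the coercivity step rigorous for arbitrary boundary sequences (not just the one-parameter families you sketched), your proof is complete and arguably cleaner than the paper's, at the cost of importing the log-det machinery rather than arguing with eigenvalues alone.
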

This theorem generalizes results in \cite{Kent:1988kx,Tyler:1987fk}, where convergence is verified for $r=1$. The conditions in Theorem \ref{theorem:Paul} are redundant. Condition \eqref{it:2} clearly implies \eqref{it:1}. For $d\geq 2$, \eqref{it:3} yields \eqref{it:2}. In fact, the conditions \eqref{it:1}, \eqref{it:2}, and \eqref{it:3} depend on the range of each $T_j$ but not on their norm. Note that condition \eqref{it:3} can only be satisfied by some $\{T_j\}_{j=1}^n$ if $\frac{n}{d}>\lfloor\frac{d-1}{r}\rfloor$, which yields $n>d\lfloor\frac{d-1}{r}\rfloor$. Condition \eqref{it:4} is independent of global scalings since multiplication of all $T_j$ with some constants $c$ means that the inverse frame operator needs to be divided by $c^2$. It requires $\alpha < r$,  which is, in fact, quite weak:
\begin{proposition}\label{prop:alpha r}
Suppose that $\{T_j\}_{j=1}^n\subset\K^{d\times r}$ is a g-frame. We then have $\trace(T_j^*S^{-1}T_j)\leq r$, for all $j=1,\ldots,n$, and if there is $j$ with $\trace(T_j^*S^{-1}T_j) = r$, then $T_j$ does not have any zero columns and $\range(S^{-1/2}T_k)\perp \range(S^{-1/2}T_j)$, for all $k\neq j$.
\end{proposition}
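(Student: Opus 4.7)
The plan is to reduce everything to the associated canonical Parseval g-frame. Set $R_j := S^{-1/2} T_j$, so that by Proposition~\ref{th:Parse} we have the resolution of the identity
\begin{equation*}
\sum_{k=1}^n R_k R_k^* = I_d,
\end{equation*}
and observe that $\trace(T_j^* S^{-1} T_j) = \trace(R_j^* R_j) = \trace(R_j R_j^*) = \|R_j\|_{HS}^2$. Thus the statement is equivalent to the corresponding bound for Parseval g-frames, namely $\|R_j\|_{HS}^2 \leq r$, with an analogous equality characterization.

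For the inequality, I would note that the resolution of identity forces $0 \preceq R_j R_j^* \preceq I_d$ in the Loewner order, so every eigenvalue of $R_j R_j^* \in \K^{d \times d}$ lies in $[0,1]$. Since $R_j \in \K^{d \times r}$, the matrix $R_j R_j^*$ has rank at most $r$ and hence at most $r$ nonzero eigenvalues, each bounded by $1$. Summing gives $\trace(R_j R_j^*) \leq r$, which is the desired bound.

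For the equality case, if $\trace(R_j R_j^*) = r$ then the above chain forces exactly $r$ eigenvalues equal to $1$ (and the remaining $d-r$ equal to $0$). Consequently $R_j R_j^*$ is the orthogonal projector $P_j$ onto $\range(R_j)$, which has dimension $r$, and $R_j^* R_j = I_r$. In particular all columns of $R_j$ have unit norm; since $T_j = S^{1/2} R_j$ and $S^{1/2}$ is invertible, the columns of $T_j$ are also nonzero, so $T_j$ has no zero columns.

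The orthogonality assertion is the main (but still short) step. Using $\sum_{k=1}^n R_k R_k^* = I_d$ and $R_j R_j^* = P_j$ we get
\begin{equation*}
\sum_{k \neq j} R_k R_k^* = I_d - P_j = P_j^\perp.
\end{equation*}
Sandwiching by $P_j$ on both sides yields
\begin{equation*}
\sum_{k \neq j} P_j R_k R_k^* P_j = P_j P_j^\perp P_j = 0,
\end{equation*}
and since each summand $P_j R_k R_k^* P_j = (P_j R_k)(P_j R_k)^*$ is positive semidefinite, each must vanish. Therefore $P_j R_k = 0$ for every $k \neq j$, which is exactly $\range(R_k) \perp \range(R_j)$, i.e.\ $\range(S^{-1/2} T_k) \perp \range(S^{-1/2} T_j)$. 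I do not anticipate serious obstacles; the only subtle point is recognizing that the eigenvalue bound combined with the rank bound is what pins down the equality case.
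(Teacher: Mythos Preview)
Your argument is correct and in fact tidier than the paper's. The paper proceeds coordinate-wise: it fixes an orthonormal basis $\{e_i\}_{i=1}^r$ of $\K^r$, applies the Parseval identity $\sum_k\|R_k^* y\|^2=\|y\|^2$ to each normalized nonzero column $y=R_je_i/\|R_je_i\|$, and then bounds the resulting double sum below by keeping only the diagonal terms $|\langle R_je_i,R_je_i\rangle|^2/\|R_je_i\|^2$, arriving at $r\geq\|R_j\|_{HS}^2$. Equality forces both the count of nonzero columns to be $r$ and all cross terms $\langle R_k e_l,R_je_i\rangle$ with $(k,l)\neq(j,i)$ to vanish, from which the no-zero-columns and range-orthogonality statements are read off. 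Your route instead uses only the operator inequalities $0\preceq R_jR_j^*\preceq I_d$ together with the rank bound $\rank(R_jR_j^*)\leq r$; this gives the trace bound and, in the equality case, that $R_jR_j^*$ is a rank-$r$ orthogonal projector and $R_j^*R_j=I_r$, after which a single sandwiching by $P_j$ delivers the orthogonality. The paper's approach is perhaps closer to the standard ``frame potential'' style calculations, whereas yours is a cleaner operator-theoretic argument that avoids all index bookkeeping; both rely on exactly the same underlying fact, the Parseval identity for $\{R_k\}$.
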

\begin{proof}[Proof of Proposition \ref{prop:alpha r}]
We first choose an orthonormal basis $\{e_i\}_{i=1}^r$ for $\K^r$ and define, for some $1\leq j\leq n$, the index set $\mathcal{I}_j:=\{i:1\leq i\leq r,\; S^{-1/2}T_j e_i\neq 0 \}$. We apply Proposition \ref{th:Parse} $r$-times to derive
\begin{align*}
r & \geq  \sum_{i\in\mathcal{I}_j} \sum_{k=1}^n \big\|T_k^* S^{-1/2} \frac{S^{-1/2} T_j e_i}{\|S^{-1/2} T_j e_i\|} \big\|^2\\
 & =  \sum_{i\in\mathcal{I}_j} \frac{1}{\|S^{-1/2} T_j e_i\|^2}\sum_{j=1}^n \sum_{l=1}^r \big|\langle  S^{-1/2}T_k e_l,  S^{-1/2} T_j e_i \rangle \big|^2\\
 & \geq   \sum_{i\in\mathcal{I}_j}  \frac{1}{\|S^{-1/2} T_j e_i\|^2} \big|\langle  S^{-1/2}T_j e_i,  S^{-1/2} T_j e_i \rangle \big|^2\\
 & =   \sum_{i\in\mathcal{I}_j} \|S^{-1/2} T_j e_i\|^2 = \|S^{-1/2} T_j\|^2_{HS} = \trace(T_j^* S^{-1} T_j).\qedhere
\end{align*}
The assumption $\trace(T_j^*S^{-1}T_j) = r$ implies that the two above inequalities become equalities, which yield the required statements. 
\end{proof}
Note that Proposition \ref{prop:alpha r} bounds the worst case scenario. Since $I=S^{-1}S$, taking the trace on both sides yields that $d=\sum_{j=1}^n \trace(T_j^* S^{-1}T_j)$. If $\{T_j\}_{j=1}^n$ has unit norm and  is close to being tight meaning $S\approx \frac{n}{d}I$, then $\alpha\approx \frac{d}{n}$. If $\{T_j\}_{j=1}^n$ are sufficiently generic or in sufficient general position, then \eqref{it:1}-\eqref{it:4} are satisfied for sufficiently large $n$:
\begin{remark}
If $\{T_j\}_{j=1}^n$ is a sample of a continuous distribution on $\K^{d\times r}$ and $n$ is sufficiently large, then with probability one all of the assumptions in Theorem \ref{theorem:Paul} are satisfied. 
\end{remark}

\begin{proof}[Proof of Theorem \ref{theorem:Paul}]
Since $\{T_j\}_{j=1}^n$ is a  g-frame, the sequence $\{\Gamma_k\}_{k=1}^\infty$ is well-defined. It is also clear that $\Gamma_{k}$ is hermitian positive definite and $\trace(\Gamma_{k})=1$, for all $k=0,1,2,\ldots$. If we suppose that $\{\Gamma_k\}_{k=1}^\infty$ converges towards $\Gamma$, then $M(\Gamma)=I_d$ must hold, and a direct computation yields that $\{R_j\}_{j=1}^n$ is a tight  g-frame with $\|R_j\|_{HS}=1$, for $j=1,\dots,n$. 

It remains to verify convergence, which we check in two steps:\\
\noindent\textbf{Step 1)} (refers to Lemma 2.1 in \cite{Tyler:1987fk}) Let $\lambda_{1,k}$ and $\lambda_{d,k}$ be the largest and smallest eigenvalues of $M_k$, respectively. We observe 
\begin{align*}
I &= \frac{d}{n}\sum_{j=1}^n \frac{M^{-1/2}_k\Gamma^{1/2}_kT_j T^*_j\Gamma^{1/2}_kM^{-1/2}_k}{\trace(T^*_j \Gamma_k T_j)}.\\
\intertext{The positive definite square root of $\Gamma^{1/2}_{k}M_k^{-1}\Gamma^{1/2}_{k}$ has the form $Q_kM_k^{-1/2}\Gamma_k^{1/2}$, where $Q_k$ is an orthogonal matrix. Since $Q_kQ^*_k=I$, we obtain}
I & = \frac{d}{n}\sum_{j=1}^n  \frac{Q_k M^{-1/2}_k\Gamma^{1/2}_kT_j T^*_j\Gamma^{1/2}_kM^{-1/2}_kQ^*_k}{\trace(T^*_j \Gamma_k T_j)}.
\end{align*}
According to \eqref{eq:def Gamma}, we have 
\begin{align*}
{\Gamma}^{1/2}_{k+1}=\frac{(\Gamma^{1/2}_{k}M_k^{-1}\Gamma^{1/2}_{k})^{1/2}}{\sqrt{\trace({\Gamma}_kM_k^{-1})}} = \frac{Q_kM_k^{-1/2}\Gamma_k^{1/2}}{\sqrt{\trace({\Gamma}_kM_k^{-1})}},
\end{align*}
which yields
\begin{align*}
I & =  \frac{d}{n}\sum_{j=1}^n \big(\frac{\Gamma^{1/2}_{k+1}T_j T^*_j\Gamma^{1/2}_{k+1}}{\trace(T^*_j \Gamma_{k} T_j)}\big) \trace({\Gamma}_kM_k^{-1}) \\
& =  \frac{d}{n}\sum_{j=1}^n \big(\frac{\Gamma^{1/2}_{k+1}T_j T^*_j\Gamma^{1/2}_{k+1}}{\trace(T^*_j \Gamma_{k+1} T_j)}\big) \frac{\trace(T^*_j \Gamma_{k+1} T_j)\trace({\Gamma}_kM_k^{-1}) }{\trace(T^*_j \Gamma_{k} T_j)}.
\intertext{According to \eqref{eq:def Gamma}, we have the identity $\trace(T^*_j \Gamma_{k+1} T_j)=\frac{\trace(T_j^* \Gamma_k^{1/2} M_k^{-1}\Gamma_k^{1/2} T_j)}{\trace({\Gamma}_kM_k^{-1}) }$, so that }
I & = \frac{d}{n}\sum_{j=1}^n \big(\frac{\Gamma^{1/2}_{k+1}T_j T^*_j\Gamma^{1/2}_{k+1}}{\trace(T^*_j \Gamma_{k+1} T_j)}\big) \frac{\trace(T_j^* \Gamma_k^{1/2} M_k^{-1}\Gamma_k^{1/2} T_j)}{\trace(T^*_j \Gamma_{k} T_j)}\\
& = \frac{d}{n}\sum_{j=1}^n \big(\frac{\Gamma^{1/2}_{k+1}T_j T^*_j\Gamma^{1/2}_{k+1}}{\trace(T^*_j \Gamma_{k+1} T_j)}\big) \frac{\| M^{-1/2}_k \Gamma^{1/2}_k T_j\|^2_{HS}}{\| \Gamma_k^{1/2}T_j\|^2_{HS}}
\end{align*}
holds. 
Since each of the matrices $\frac{\Gamma^{1/2}_{k+1}T_j T^*_j\Gamma^{1/2}_{k+1}}{\trace(T^*_j \Gamma_{k+1} T_j)}$ is positive-semi-definite, the definition of $M_{k+1}$ in \eqref{eq:M} with its largest and smallest eigenvalues implies 
\begin{equation*}
\lambda^{-1}_{d,k}M_{k+1}\geq I \geq \lambda^{-1}_{1,k}M_{k+1}.
\end{equation*}
The left inequality yields $\lambda_{d,k+1}/\lambda_{d,k}\geq 1$ and the right inequality implies $ \lambda_{1,k+1}/ \lambda_{1,k}\leq 1$. Thus, as required, the sequence $(\lambda_{d,k})_{k=1}^\infty$ is increasing, $(\lambda_{1,k})_{k=1}^\infty$ is decreasing, and both converge towards $\lambda_d\leq 1$ and $\lambda_1\geq 1$, respectively. 

%
%
%
%
%


\noindent\textbf{Step 2)} (refers to Theorem 2.2 and Corollary 2.2 in \cite{Tyler:1987fk})
Since $\Gamma_k$ is positive definite with trace $1$, there is a subsequence $(\Gamma_{k_m})_{m=1}^\infty$ that converges towards some positive semi-definite matrix $\Gamma$. We must now verify that $\Gamma$ is positive definite and that the entire sequence converges. 

If $\Gamma T_j = 0$, then let $\theta_j\in\K^{d\times r}$ be such that 
\begin{equation*}
\Gamma_{k_m}^{1/2}T_j/\trace(T_j^*\Gamma_{k_m}T_j)\rightarrow \theta_j.
\end{equation*}
For $\mathcal{J}:=\{j: \Gamma T_j\neq 0 \}$, we observe that the subsequence $(M_{k_m})_{m=1}^\infty$ converges to 
\begin{equation}\label{eq:M2}
M  = \frac{d}{n} \big(\sum_{j\in \mathcal{J}} \frac{\Gamma^{1/2}T_j T^*_j\Gamma^{1/2}}{\trace(T^*_j \Gamma T_j)}+\sum_{j\not\in \mathcal{J}} \theta_j\theta_j^*\big).
\end{equation}
Step 1 ensures that $M$ is invertible since its smallest eigenvalue is positive.  Let $Q_{k_m}$ be the orthogonal matrices in Step 1, so that $Q_{k_m}\rightarrow Q$ and $(\Gamma^{1/2}M^{-1}\Gamma^{1/2})^{1/2} = QM^{-1/2}\Gamma^{1/2}$ is satisfied.  
According to the definition \eqref{eq:def Gamma}, $(\Gamma_{k_m+1})_{m=1}^\infty$ converges towards 
\begin{equation*}
\Gamma_0=\Gamma^{1/2}M^{-1}\Gamma^{1/2}/\trace(M^{-1}\Gamma)
\end{equation*}
and a short calculation yields that the sequence $(M_{k_{m}+1})_{m=1}^\infty$ converges to
\begin{equation*}
M_0  = \frac{d}{n} \big(\sum_{j\in \mathcal{J}} \frac{\Gamma_0^{1/2}T_j T^*_j\Gamma_0^{1/2}}{\trace(T^*_j \Gamma_0 T_j)}+\sum_{j\not\in \mathcal{J}} \phi_j\phi_j^*\big),
\end{equation*}
where $\phi_j=QM^{1/2}\theta_j/\sqrt{\trace(\theta_j^*M^{-1}\theta_j)}$.

Manipulations as in Step 1 applied to the formulas for $M$ and $M_0$ imply that 
\begin{equation}\label{eq:1}
I_d = \frac{d}{n}\sum_{j\in\mathcal{J}} \frac{M^{-1/2}\Gamma^{1/2}T_j T^*_j\Gamma^{1/2}M^{-1/2}}{\trace(T^*_j \Gamma T_j)} + \sum_{j\not\in\mathcal{J}}M^{-1/2}\theta_j\theta_j^*M^{-1/2}
\end{equation}
as well as 
\begin{equation}\label{eq:2}
I_d = \frac{1}{\lambda_1}M_0+\frac{d}{n}\sum_{j\in\mathcal{J}} \frac{\Gamma_0^{1/2}T_j T^*_j\Gamma_0^{1/2}}{\trace(T^*_j \Gamma_0 T_j)} \big(\frac{\trace(T^*_j\Gamma^{1/2}M^{-1}\Gamma^{1/2}T_j)}{\trace(T^*_j \Gamma T_j)}-\frac{1}{\lambda_1}\big) 
+\frac{d}{n}\sum_{j\not\in\mathcal{J}}\phi_j\phi_j^*\big(\trace(\theta_j^*M^{-1}\theta_j-\frac{1}{\lambda_1})\big).
\end{equation}

According to Step 1, the largest and smallest eigenvalue of both $M$ and $M_0$ are $\lambda_1$ and $\lambda_d$, respectively. Let $P$ and $P_0$ be the eigenprojectors of $M$ and $M_0$, respectively, associated with $\lambda_1$ and $s=\rank(P)$ and $s_0=\rank(P_0)$. As in \cite{Tyler:1987fk}, without loss of generality, we can suppose $s_0\geq s$. 

By multiplying both sides from the left and the right with $P_0$, the relations $P_0^2=P_0$ and $\frac{1}{\lambda_1}P_0MP_0=P_0$ yield
\begin{equation*}
0 = \sum_{j\in\mathcal{J}} \frac{P_0\Gamma_0^{1/2}T_j T^*_j\Gamma_0^{1/2}P_0}{\trace(T^*_j \Gamma_0 T_j)} \big(\frac{\trace(T^*_j\Gamma^{1/2}M^{-1}\Gamma^{1/2}T_j)}{\trace(T^*_j \Gamma T_j)}-\frac{1}{\lambda_1}\big) 
+\sum_{j\not\in\mathcal{J}}P_0\phi_j\phi_j^*P_0\big(\trace(\theta_j^*M^{-1}\theta_j-\frac{1}{\lambda_1})\big).
\end{equation*}

Thus, for $j\in\mathcal{J}$, we either have 
\begin{equation*}
P_0\Gamma_0^{1/2}T_j T^*_j\Gamma_0^{1/2}P_0=0 \quad\text{or}\quad \frac{\trace(T^*_j\Gamma^{1/2}M^{-1}\Gamma^{1/2}T_j)}{\trace(T^*_j \Gamma T_j)}=\frac{1}{\lambda_1}.
\end{equation*} 
The first option yields $P_0QM^{-1/2}\Gamma^{1/2}T_j=0$. The second option implies $\langle P,\Gamma^{1/2}T_jT_j^* \Gamma^{1/2}\rangle=\langle I,\Gamma^{1/2}T_jT_j^* \Gamma^{1/2}\rangle$, which yields after some computations $P\Gamma^{1/2}T_j = \Gamma^{1/2}T_j$.

For $j\notin\mathcal{J}$, we obtain 
\begin{equation*}
P_0\phi_j\phi_j^*P_0=0\quad\text{or}\quad \trace(\theta_j^*M^{-1}\theta_j-\frac{1}{\lambda_1})=0.
\end{equation*}
Similar to the above considerations, the first option yields $P_0QM^{-1/2}\theta_j = 0$. The second option implies $P\theta_j = \theta_j$.

We now premultiply both sides of \eqref{eq:1} with $P_0Q$ and postmultiply by $I-P$. The above four options and using that $P$ and $M^{-1/2}$ commute imply $P_0Q(I-P)=0$, which is equivalent to $Q^*P_0Q = Q^*P_0QP$. Since $s_0\geq s$, we obtain $P=Q^*P_0Q$, so that $QP=P_0Q$. The latter implies with the above that for $j\in\mathcal{J}$, either 
\begin{equation*}
P\Gamma^{1/2}T_j=0\quad\text{or}\quad P\Gamma^{1/2}T_j=\Gamma^{1/2}T_j.
\end{equation*}
Hence, we can split $\{1,\ldots,n\}$ into two disjoint index sets $N_1$ and $N_2$ such that 
\begin{equation*}
P\Gamma^{1/2}T_j =
\begin{cases}
0,& j\in N_1\\
\Gamma^{1/2}T_j,& j\in N_2.
\end{cases}
\end{equation*}
Condition \eqref{it:2} yields that $\spann(\range(T_j):j\in N_i\})$ equals $\K^d$ for either $i=1$ or $i=2$. If this holds for $i=1$, then we must have $P\Gamma^{1/2}=0$. If it holds for $i=2$, then we derive $P\Gamma^{1/2}=\Gamma^{1/2}$. 

Suppose now that $P\Gamma^{1/2}=0$ holds. The same arguments as in the previous paragraph yield, for $j\in\mathcal{J}$, that either $P\theta_j=0$ or $P\theta_j=\theta_j$, see also \cite{Tyler:1987fk}. Pre- and postmultiplying both sides in \eqref{eq:M2} by $P$ yields 
\begin{equation*}
\lambda_1 P = \frac{d}{n} \sum_{j\in \mathcal{J}_0} \theta_j\theta_j^*,
\end{equation*}
where $\mathcal{J}_0:=\{j : j\not\in\mathcal{J},\; P\theta_j=\theta_j\}$. Next, we take the trace on both sides and use that $\trace(\theta_j^*\theta_j)=1$ to derive
\begin{equation*}
\lambda_1 s \leq \frac{d}{n} n_1,
\end{equation*}
where $n_1$ is the number of $T_j$ whose range is contained in the null space of $\Gamma$. Condition \eqref{it:3} yields $\lambda_1\leq \frac{dn_1}{sn} <1$, which is a contradiction to the results of Step 1. Thus, we must have $P\Gamma^{1/2}=\Gamma^{1/2}$, so that
\begin{equation}\label{eq:2.10}
P = \frac{d}{n\lambda_1} \sum_{j\in\mathcal{J}} \frac{\Gamma^{1/2}T_jT_j^*\Gamma^{1/2}}{\trace(T_j^*\Gamma T_j)} + \frac{d}{n\lambda_1} \sum_{j\in\mathcal{J}_0}\theta_j\theta_j^*.
\end{equation}
Since the ranks of the two summations in \eqref{eq:M2} are additive, see also \cite{Tyler:1987fk}, the ranks of the two summations in \eqref{eq:2.10} are additive. Hence, the two terms themselves must be orthogonal projections. According to condition \eqref{it:1}, the rank of the first term equals $\rank(\Gamma)$. If $\rank(\Gamma)<s$, then taking the trace of the second term implies with condition \eqref{it:3} that $\lambda_1<\frac{1}{s-\rank(\Gamma)}\leq 1$, which is a contradiction to Step 1. Therefore, $\mathcal{J}_0$ is empty and $s=\rank(\Gamma)$. Taking the trace of the first term in \eqref{eq:2.10} yields
\begin{equation}\label{eq:lambda}
\lambda_1 = \frac{d (n-n_1)}{n s}.
\end{equation}

We obtain $d=\trace(M)\geq s\lambda_1+(d-s)\lambda_d$, so that \eqref{eq:lambda} implies
\begin{equation}\label{eq:last eq}
\lambda_d \leq \frac{dn_1}{n(d-s)}.
\end{equation}

At this point, we claim that the assumption \eqref{it:4} implies for at least one $k$ that
\begin{equation}\label{eq:main condition 2}
\lambda_{d,k} > \frac{d}{n} \frac{\#\{j: \range(T_j)\subset L\}}{\dim(L)}
\end{equation}
holds, for all proper linear subspaces $L\subset \K^d$, but postpone the verification to the end of this proof.

Since $\lambda_{d,k}$ is an increasing sequence, \eqref{eq:main condition 2} implies $d\frac{n_1}{n(d-s)}<\lambda_d$ if $d>s$. This violates \eqref{eq:last eq}, so that $M$ and hence $\Gamma$ must have full rank and, therefore, $\Gamma$ is positive definite. Also, $P$ must have full rank implying $\lambda_1=\lambda_d=1$ and $M=I$. Since the eigenvalues are monotone, the entire sequence $(M_k)_{k=1}^\infty$ converges towards $I$. The latter can be used with Banach's Fix point theorem to verify that also $(\Gamma_k)_{k=1}^\infty$ must converge, hence, towards $\Gamma$. By continuity, we obtain $M(\Gamma)=I$.

We still need to verify \eqref{eq:main condition 2}. We observe $\Gamma_1=S^{-1}/\trace(S^{-1})$ and define $W_j:=\Gamma_1^{1/2}T_j$, which yields $\sum_{j=1}^n W_j W_j^* = I/\trace(S^{-1})$. By using $\alpha=\max_{1\leq j\leq n}\trace(T_j^* S^{-1}T_j)$ as in \eqref{it:4}, this implies $M_1\succeq \frac{d}{n\alpha}I$, so that
\begin{equation*}
\lambda_{d,1}\geq \frac{d}{n\alpha}>\frac{d}{n}\frac{\#\{j: \range(T_j)\subset L\}}{\dim(L)},
\end{equation*}
where the last inequality is due to \eqref{it:4}. This concludes the proof.
\end{proof}


\begin{remark}
The inversion of $M_k$ in the iterative scheme \eqref{eq:def Gamma} is numerically stable because there is a lower positive bound on the smallest eigenvalues of $M_k$. Therefore, we can expect that our scheme is quite stable overall. 
\end{remark}

Let us have a look at few pathological examples first:
\begin{example}
\begin{itemize}
\item[1)] 
If $\{T_j\}_{j=1}^n$ is already a unit norm tight  g-frame, then $\Gamma_1=\Gamma=\frac{1}{d}I_d$ and $\{R_j\}_{j=1}^n=\{T_j\}_{j=1}^n$.

\item[2)] 
If the g-frame consists of a single matrix $T\in\K^{d\times r}$, hence, $r=d$ and $T$ is regular, then $\Gamma_1=\Gamma=T T^*$ and \eqref{eq:R} yields $R=\frac{1}{\sqrt{d}}(TT^*)^{-1/2}T$, which is a unit norm tight g-frame. 
\end{itemize}
\end{example}

Next, we illustrate Theorem \ref{theorem:Paul} with few numerical examples:
\begin{example}\label{ex:some}
Let $d=2$, $r=1$, $n=3$. We pick $\alpha_1, \alpha_2, \alpha_3$ from a uniform distribution on $[0,2\pi]$ and define 
 $T_j=\binom{\cos(\alpha_j)}{\sin(\alpha_j)}$,  $j=1,2,3$. By multiplication with $-1$ and  rotation of all $3$ vectors, we can restrict the angles to lie between $0$ and $\frac{2}{3}\pi$. For each random choice $\{T_j\}_{j=1}^3$, we compute a unit norm tight frame $\{R_j\}_{j=1}^3$ using our proposed algorithm. Up to rotations $U_\theta= \begin{pmatrix}\cos(\theta) &-\sin(\theta)\\ \sin(\theta) & \cos(\theta)
\end{pmatrix}$ and multiplication by $-1$, there is only one single unit norm tight frame with three elements. We choose $\{Y_j\}_{j=1}^3$, where $Y_j=\binom{\cos(\beta_j)}{\sin(\beta_j)}$ and $\beta_1=0$, $\beta_2=1/3\pi$, and $\beta_3=2/3\pi$. Therefore, to find the tight frame with unit norm that is closest to $\{T_j\}_{j=1}^3$, we minimize the distance to $\{T_j\}_{j=1}^3$ over all rotations, i.e, 
\begin{equation*}
 \hat{\theta}:=\arg\min_{\theta\in [-2/3\pi,2/3\pi]} \sum_{j=1}^3 \| U_\theta Y_j-T_j\|^2,
 \end{equation*}
 and define the closest tight frame by $\{Z_j\}_{j=1}^3:=\{U_{\hat{\theta}} Y_j\}_{j=1}^3$. Note that we can suppress the multiplication by $-1$ because the angles of $\{T_j\}_{j=1}^3$ only run in $[0,\frac{2}{3}\pi]$. The average error of $\sum_{j=1}^n\|Z_j-R_j\|^2$ over $1000$ realizations is $\approx .0016=1/625$, see also Fig.~\ref{fig:Paul} for a visualization of few examples. In our numerical experiments, we observed that our proposed algorithm finds a tight frame that is almost identical to the closest tight frame if all pairs $T_i$ and $T_j$, for $i\neq j$, are far enough from each other.
 \begin{figure}
\centering
\includegraphics[width=.28\textwidth]{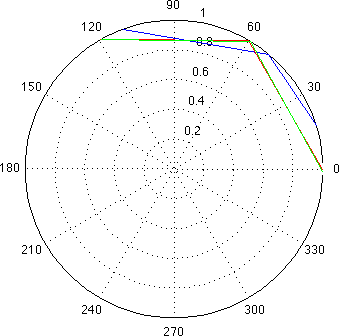}
\includegraphics[width=.28\textwidth]{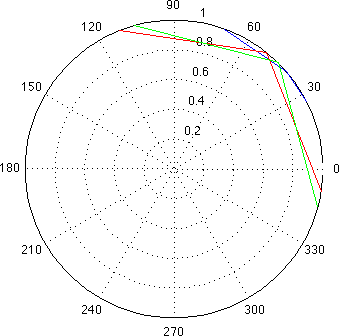}
\includegraphics[width=.28\textwidth]{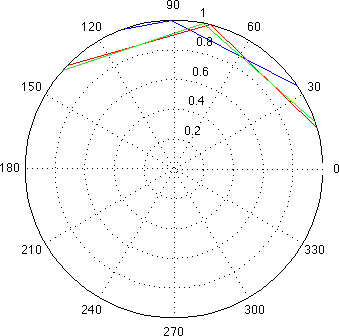}

\vspace{1ex}

\includegraphics[width=.28\textwidth]{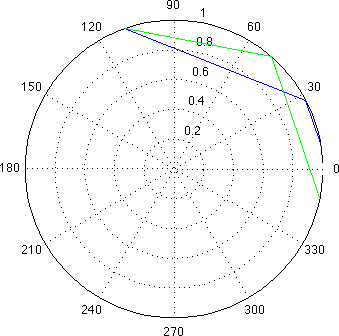}
\includegraphics[width=.28\textwidth]{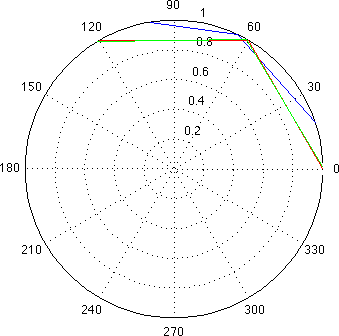}
\includegraphics[width=.28\textwidth]{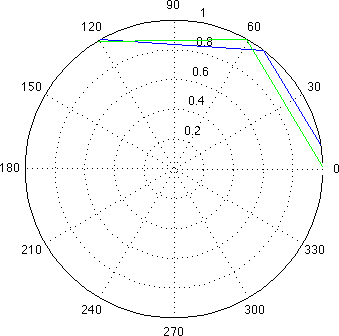}
\caption{Original frame $\{T_j\}_{j=1}^3$ in blue, optimal frame $\{Z_j\}_{j=1}^3=\{U_{\hat{\theta}} Y_j\}_{j=1}^3$ in green, and our proposed algorithm finds $\{\Gamma^{1/2}T_j/\|\Gamma^{1/2}T_j\|\}_{j=1}^3$ in red. (Green and red lines are sometimes right on top of each other).} \label{fig:Paul}
\end{figure}  
\end{example}

\begin{example}\label{ex:theta}
For $0\leq t\leq 1/2$, define 
\begin{equation*}
T_1(t)=\begin{pmatrix}
\sqrt{1-t} & 0 \\ 0 & \sqrt{t}\end{pmatrix},
\quad 
T_2(t)=\begin{pmatrix}  \sqrt{t}& \sqrt{t} \\ 0 & \sqrt{1-2t}
\end{pmatrix},\quad 
T_3(t)=\begin{pmatrix} \sqrt{\frac{1-t}{2}}& 0 \\ 0 & \sqrt{\frac{1+t}{2}}
\end{pmatrix},
\end{equation*}
and let $S(t)$ denote the associated g-frame operator. Note that $\{T_j(t)\}_{j=1}^3$ satisfies the assumptions of Theorem \ref{theorem:Paul}, for all $0\leq t\leq 1/2$. If $t=0$, then we have a tight generalized frame with unit norms. Our algorithm provides a Parseval g-frame $\{\sqrt{2/3}R_j(t)\}_{j=1}^3$ with equal HS-norm, see Fig.~\ref{fig:error} for the errors $\sum_{j=1}^3 \|T_j(t)- \sqrt{2/3}R_j(t)\|_{HS}^2$ and $\sum_{j=1}^3 \|T_j(t)- S(t)^{-1/2}T_j(t)\|_{HS}^2$.


\begin{figure}
\centering
\includegraphics[width=.35\textwidth]{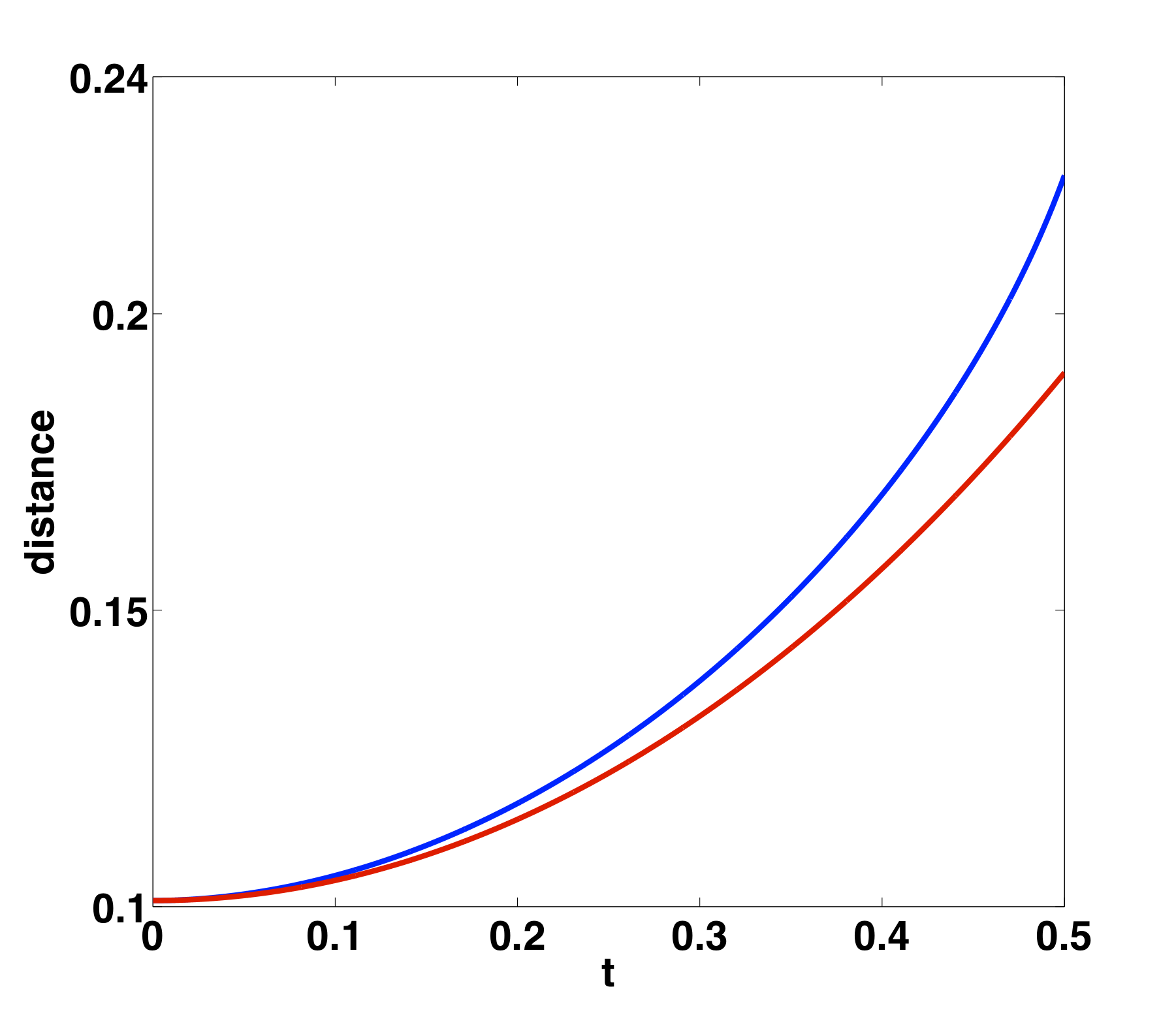}
\caption{$t$ versus the distance (blue) $\sum_{j=1}^3 \|T_j(t)- \sqrt{2/3}R_j(t)\|_{HS}^2$  and (red) $\sum_{j=1}^3 \|T_j(t)- S(t)^{-1/2}T_j(t)\|_{HS}^2$ in Example \ref{ex:theta}. Note that $\{T_j(0)\}_{j=1}^3$ is tight but needs rescaling to become the Parseval generalized frame $\{\sqrt{2/3}T_j(0)\}_{j=1}^3$. It is clear that $R_j(0)=T_j(0)$ and $\sqrt{2/3}T_j(0)=T_j(0)S(0)^{-1/2}$ holds, and we have $\sum_{j=1}^3 \|T_j(0)- \sqrt{2/3}T_j(0)\|_{HS}^2=0.1010$. The latter explains why the distance plots do not start at $0$.
 }\label{fig:error}
\end{figure}
\end{example}

\begin{example}
We choose each entry of each element in $\{T_j\}_{j=1}^3\subset\K^{2\times 2}$ independently according to a uniform distribution on $[0,1]$ and normalize so that $\|T_j\|_{HS}=1$. All numbers in the following are averaged over $10,000$ realizations, and let $S$ denote the generalized frame operator of  $\{T_j\}_{j=1}^3$. 
According to Theorem \ref{th:Parse}, $\{S^{-1/2}T_j\}_{j=1}^3$ is the Parseval generalized frame that is closest to $\{T_j\}_{j=1}^3$, and we compute $ \sum_{j=1}^3 \| T_j-S^{-1/2}T_j\|^2_{HS}\approx 0.61$. However, the elements of $\{S^{-1/2}T_j\}_{j=1}^3$ may not have equal norm. Based on Theorem \ref{theorem:Paul}, the collection $\{\tilde{R}_j\}_{j=1}^3 =\{\sqrt{2/3}R_j\}_{j=1}^3$ is a Parseval generalized frame,  
 its elements have equal norm, and we compute $ \sum_{j=1}^3 \| T_j-\tilde{R}_j\|^2_{HS}\approx 0.71$. Thus, the additional property of having equal norm costs $\approx 0.10=0.71-0.61$. It remains open though if there are other Parseval generalized frames whose elements have equal norm and that are closer to $\{T_j\}_{j=1}^3$.
\end{example}

Let us also illustrate when the algorithm fails to converge:
\begin{example}
For $t=0$, the collection $\{T_j(t)\}_{j=1}^3$, where 
\begin{equation*}
T_1(t)=\begin{pmatrix}  1 & 0\\ 0 &0
\end{pmatrix},\quad T_2(t)=\begin{pmatrix}  1 & t\\ 0 &0
\end{pmatrix},\quad\text{and}\quad T_3(t)=\begin{pmatrix}  0 & 0\\ 0 &1
\end{pmatrix}, 
\end{equation*}
violates the conditions in Theorem \ref{theorem:Paul}, and indeed, the iterative scheme does not converge towards a positive definite matrix $\Gamma$. For $t>0$, on the other hand, we observe convergence numerically.
\end{example}
The subsequent sections are dedicated to provide some examples of random samples satisfying the assumptions of Theorem \ref{theorem:Paul}. We shall also provide examples that allow for fast matrix vector multiplications such as convolution operators and we  support the intuition that $\Gamma$ is close to the identity if the sample is close to being tight. 

\section{Examples of random matrices satisfying the assumptions for convergence}\label{sec:II}
 We first fuse the concepts of generalized frames and probabilistic frames as developed in \cite{Sun:2006fk} and \cite{Okoudjou:2010aa}, respectively:
\begin{definition}\label{def:prob frames}
Let $p\geq 1$ be an integer. 
We say that a random matrix $T\in \K^{d\times r}$ is a \emph{random g-frame of order $p$}  if there are positive constants $A_p,B_p>0$ such that 
\begin{equation}\label{eq:prob fusion frame formula}
A_p\|x\|^{2p} \leq \mathbb{E} \|T^*x\|^{2p}  \leq B_p\|x\|^{2p}, \text{ for all $x\in\K^d$.}
\end{equation} 
A random g-frame $T$ of order $p$ is called \emph{tight} if we can choose $A_p=B_p$. 
\end{definition}

Following the lines of the proof for rank one projectors considered in \cite{Ehler:2010aa} yields that any random g-frame $T$ of order $1$ satisfies $ A_1\leq \frac{1}{d}\mathbb{E} \|T(\omega)\|^2_{HS}\leq B_1$. 
Similar to finite frames, if $T$ is a random g-frame of order $p$, then the \emph{random g-frame operator}
\begin{equation}\label{eq:S}
S : \K^d \rightarrow \K^d,\quad x\mapsto  \mathbb{E}TT^* x 
\end{equation}
is positive, self-adjoint, and invertible. Thus, we obtain the reconstruction formula
\begin{equation}\label{eq:reconstr}
x = \mathbb{E} T(S^{-1} T)^*x.
\end{equation}
Moreover, $T$ is a tight random g-frame of order $1$ if and only if $S=A I_d$, where $A=\frac{1}{d}\mathbb{E} \|T\|^2_{HS}$. 

Note that the case $r=1$ of the following result is already explicitly contained in \cite{Vershynin:2010aa}:
\begin{theorem}\label{theorem:sampling}
Let $\{T_j\}_{j=1}^n$ be independent copies of a tight random g-frame $T\in\K^{d\times r}$ of order $1$ with $\|T\|^2_{HS}=R$ for some positive constant $R$. For fixed $\varepsilon\in(0,1]$, there are positive constants $\gamma_\varepsilon,c_\varepsilon>0$ such that, for all $n\geq c_\varepsilon d \ln(d)$,  the g-frame operator $S_n$ of the scaled collection $\{\sqrt{\frac{d}{nR}}T_j\}_{j=1}^n$ satisfies $\|I_d - S_n  \|_\infty  <\varepsilon$ with probability at least $1-e^{-\gamma_\varepsilon\frac{n}{d}}$ .
\end{theorem}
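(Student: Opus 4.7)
The plan is to recognize this as a standard matrix concentration problem and invoke the matrix Chernoff inequality, in the spirit of Vershynin's argument for the rank-one case.

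First I would unpack what tightness of order $1$ tells us about first moments. The tightness assumption combined with the polarization-type identity $\mathbb{E}\|T^*x\|^2=\langle \mathbb{E}[TT^*]x,x\rangle$ forces $\mathbb{E}[TT^*]=A\,I_d$ for some $A>0$. Taking traces on both sides and using $\mathbb{E}\|T\|^2_{HS}=R$ (which is deterministic here since $\|T\|^2_{HS}=R$ almost surely) pins down $A=R/d$. Consequently, writing $X_j := \frac{d}{nR}T_jT_j^*$ gives independent self-adjoint positive semidefinite matrices with $\mathbb{E}\bigl[\sum_{j=1}^n X_j\bigr]=I_d$, and $S_n=\sum_{j=1}^n X_j$.

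Second, I would get a uniform operator-norm bound on each $X_j$. Since $T_jT_j^*\succeq 0$, its largest eigenvalue is bounded by its trace, and hence $\|T_jT_j^*\|_\infty \leq \|T_j\|_{HS}^2 = R$ almost surely. Therefore $0\preceq X_j\preceq \frac{d}{n}I_d$ almost surely. This is exactly the type of bounded, sum-of-independent-positive-semidefinite-matrices setup to which the matrix Chernoff inequality applies: with $\mu_{\min}=\mu_{\max}=1$ (the eigenvalues of the expectation $I_d$) and the per-summand bound $d/n$, Tropp's matrix Chernoff inequality yields
\begin{equation*}
\Pr\bigl[\|I_d-S_n\|_\infty\geq\varepsilon\bigr]\leq 2d\exp\!\Bigl(-\tfrac{f(\varepsilon)\,n}{d}\Bigr),
\end{equation*}
where $f(\varepsilon):=\min\{(1+\varepsilon)\ln(1+\varepsilon)-\varepsilon,\,\varepsilon+(1-\varepsilon)\ln(1-\varepsilon)\}>0$ for $\varepsilon\in(0,1]$.

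Finally, I would absorb the prefactor $2d$ into the exponent. Pick $\gamma_\varepsilon:=f(\varepsilon)/2$; then the right-hand side is at most $e^{-\gamma_\varepsilon n/d}$ as soon as $2d\leq e^{\gamma_\varepsilon n/d}$, which is equivalent to $n\geq (d/\gamma_\varepsilon)\ln(2d)$. Choosing any $c_\varepsilon\geq 2/\gamma_\varepsilon$ (for instance $c_\varepsilon:=4/f(\varepsilon)$, plus a harmless constant to absorb the $\ln 2$ versus $\ln d$) gives the required conclusion for all $n\geq c_\varepsilon d\ln d$. The only genuine step is the application of matrix Chernoff; the main conceptual obstacle is simply verifying that the boundedness condition and first-moment condition hold in the g-frame setting, which both reduce to the fact that $\|T\|_{HS}^2=R$ almost surely together with tightness of order $1$.
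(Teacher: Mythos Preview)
Your proposal is correct and follows essentially the same route as the paper: both arguments reduce the problem to Tropp's matrix Chernoff inequality by checking that the summands $X_j=\tfrac{d}{nR}T_jT_j^*$ are independent, positive semidefinite, have expectation summing to $I_d$, and satisfy the uniform bound $\|X_j\|_\infty\le d/n$. The only cosmetic difference is that you take $f(\varepsilon)$ as the minimum of the two Chernoff exponents, whereas the paper proves the elementary inequality $(1+\varepsilon)^{1+\varepsilon}(1-\varepsilon)^{\varepsilon-1}\le e^{2\varepsilon}$ to show that the upper-tail exponent $(1+\varepsilon)\ln(1+\varepsilon)-\varepsilon$ is the smaller of the two and then works with that one alone; the resulting constants differ slightly but the argument is the same.
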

\begin{proof}
Let $\lambda_{\min}(S_n)$ and $\lambda_{\max}(S_n)$ denote the smallest and largest eigenvalue of $S_n$, respectively. The matrix Chernoff bounds as stated in \cite{Tropp:2011fk} yield, for all $0\leq\varepsilon\leq 1$,
\begin{align*}
\mathbb{P}\big( \lambda_{\min}(S_n)\leq 1-\varepsilon\big) & \leq d
(\frac{e^{-\varepsilon}}{(1-\varepsilon)^{1-\varepsilon}})^{n/d} \\
\mathbb{P}\big( \lambda_{\min}(S_n)\geq 1+\varepsilon\big) & \leq d(\frac{e^\varepsilon}{(1+\varepsilon)^{1+\varepsilon}})^{n/d}.
\end{align*}
Some calculus yields
\begin{equation*}
(1+\varepsilon)^{1+\varepsilon}(1-\varepsilon)^{\varepsilon-1} \leq e^{2\varepsilon},\quad\forall \varepsilon\in [0,1],
\end{equation*}
so that we derive
%
\begin{equation*}
\mathbb{P}\big(\|I_d - \sum_{j=1}^n \frac{d}{nR} T^*_jT_j  \|_\infty  \geq \varepsilon\big)\leq 2d(\frac{e^\varepsilon}{(1+\varepsilon)^{1+\varepsilon}})^{n/d}.
\end{equation*}
We can further compute
\begin{align*}
2d(\frac{e^\varepsilon}{(1+\varepsilon)^{1+\varepsilon}})^{n/d} & = 2d e^{-\frac{n}{d}( (1+\varepsilon)\ln(1+\varepsilon)-\varepsilon )}\\
& = e^{-\frac{n}{d}( (1+\varepsilon)\ln(1+\varepsilon)-\varepsilon -\frac{d}{n}\ln(2)\ln(d))}.
\end{align*}
Since $(1+\varepsilon)\ln(1+\varepsilon)-\varepsilon >0$, for all $\varepsilon\in(0,1]$, we can find a suitable constant $\gamma_\varepsilon>0$ if $n$ is sufficiently large. 
\end{proof}
\begin{remark}
The constants $\gamma_\varepsilon$ and $c_\varepsilon$ in Theorem \eqref{theorem:sampling} can be explicitly computed. By using $a_\varepsilon:=(1+\varepsilon)\ln(1+\varepsilon)-\varepsilon >0$, we can choose $c_\varepsilon>\frac{\ln(2)}{a_\varepsilon}$ and $\gamma_\varepsilon=a_\varepsilon-\frac{\ln(2)}{c_\varepsilon}$. 
\end{remark}

Next, we discuss a few examples.
\begin{example}[Gaussian matrices]\label{ex:Gaussian}
Let $1\leq k<d$ and consider the $d\times r$ random matrix $T$ whose entries are i.i.d.~Gaussian. Its joint element density is 
\begin{equation}\label{eq:density gaussian}
M\mapsto \frac{1}{(2\pi)^{kd/2}}\exp(-\frac{1}{2}\|M\|^2_{HS}).
\end{equation}
The resulting self-adjoint matrix $TT^*\in\R^{d\times d}$ is a singular Wishart-matrix, cf.~\cite{Uhlig:1994fk}. According to \eqref{eq:density gaussian} the distribution of $T$ is invariant under orthogonal transformations, so that $T$ is a tight random g-frame of order $p$, for all integers $p$. By using the moments of the chi-squared distribution, we see that the bounds satisfy $A_p=B_p=k(k+2)\cdots(k+2p-2)$. 
\end{example}

\begin{example}[fusion frames]
If the columns of a matrix $T\in \K^{d\times r}$, $r<d$,  has orthonormal columns, then we can identify $T$ with a subspaces $V\in\mathcal{G}_{r,d}(\K)$, where $\mathcal{G}_{r,d}(\K)$ denotes the Grassmann space, i.e., the collection of $r$-dimensional subspaces of $\K^d$. The Haar measure on $\mathcal{G}_{r,d}(\K)$ then induces a random g-frame of order $p$ for all integers $p$. 
\end{example}

\begin{example}[Gabor]
Time-frequency structured matrices were considered in \cite{Pfander:fk} in relation to compressed sensing, in which some window vector is modulated and shifted. We use cyclic shifts, which can be performed by applying a matrix $C$ having ones in the lower secondary diagonal, another one in the upper right corner, and zeros anywhere else. The modulation operator on $\C^d$ is given by
\begin{equation*}
M =\diag(1,e^{2\pi i/d},\ldots,e^{2\pi i (d-1)/d}).
\end{equation*}
For any nonzero $g\in\C^d$, the full Gabor system $\{M^\ell C^k g: \ell,k=0,\ldots,d-1\}$ has cardinality $d^2$ and forms a tight frame for $\C^d$, cf.~\cite{Lawrence:2005fk}. We shall use the $d^2\times d$ matrix $T$, whose rows are formed by the tight frame vectors. A short computation yields that, if $g$ is chosen at random as the Rademacher sequence, then $T$ is a tight random g-frame of order $1$. Moreover, each $TT^*$ is an orthogonal projector, so that $TT^*$ corresponds to a tight random fusion frame. The same holds when $g$ is the Steinhaus sequence, i.e., each entry is uniformly distributed on the complex unit circle.
\end{example}

Next, we have an example that indeed allows for fast matrix vector multiplication:
\begin{example}[Circulant matrices]\label{ex:circulant}
Given a vector $x=(x_1,\ldots,x_d)^\top\in\R^d$, the corresponding circulant matrix is
\begin{equation*}
\tilde{T}=\begin{pmatrix}
x_1   	& x_d 	&\hdots 	& x_2\\
x_2      	& \ddots	& \ddots 	&\vdots\\
\vdots 	& \ddots 	& \ddots &x_d\\
x_d 		& \hdots& x_2  & x_1
\end{pmatrix}.
\end{equation*}
Each column of $\tilde{T}$ is a cyclic shift of the previous one. The left $d\times k$ block $T$ of the matrix $\tilde{T}$ was used as a compressed sensing measurement matrix in \cite{Rauhut:2012fk}. If the entries of $x$ are i.i.d.~with zero mean and non-vanishing second moments, then $T$ is a tight random g-frame of order $1$ with $A_1=k\mathbb{E}(x_i^2)$. For instance, if $x$ is the Rademacher sequence, i.e., entries are independent and equal to $\pm 1$ with probability $1/2$, then $T$ is tight of order $1$ but not of order $2$ in general. It is well-known that the discrete Fourier matrices diagonalize circulant matrices, so that fast matrix vector multiplications are available. In fact, the terms ``filter bank'' and ``filterning'' are usually associated with the application of convolution operators, so that each channel corresponds to a circulant matrix with potentially some subsampling involved. 
\end{example}

\begin{remark}
Samples of all of the above examples satisfy the conditions \eqref{it:1}-\eqref{it:4} with high probability for sufficiently large sample size. The circulant matrices represent convolution operators and hence correspond to a proper filter bank scheme. They enable fast matrix vector multiplications, hence, the circulant samples in Example \ref{ex:circulant} are indeed suitable for our construction in Section \ref{sec:I} that preserves this fast algorithmic scheme using the filter bank shown in Fig.~\ref{fig:filtering}. Each channel corresponds to filtering, but we require one additional linear operator for pre- and postmultiplication. 
\end{remark}

It must be mentioned that filter banks usually involve some subsampling. Let the matrix $T\in\K^{\tilde{r}\times r}$, $\tilde{r}\geq r$ be a random matrix with a single one in each column, whose position is chosen independently at random in a uniform fashion. Then each matrix of the sample $\{T_j\}_{j=1}^n$ corresponds to a sampling operator, so that we derive $n$ samplings of length $r$. Indeed, $T$ is a tight random g-frame, but it may not satisfy all other conditions in Theorem \ref{theorem:Paul}. Nonetheless, subsampling operators in a filter bank are used in combination with more sophisticated filters, say $\{R_j\}_{j=1}^n\subset\K^{d\times \tilde{r}}$, so that it is possible that the conditions are satisfied by $\{R_j T_j\}_{j=1}^n\subset\K^{d\times r}$.

\section{Closeness to the original $g$-frame}\label{sec:III}
To relate the algorithm of the previous section to the Paulsen problem, we would need estimates on the distance between the original and the resulting $g$-frame. Especially, if the original unit norm $g$-frame is close to being tight, then we aim to verify that the computed unit norm tight $g$-frame is nearby. We do not derive any estimates for fixed $n$ but shall provide some framework for random samples that supports such intuition. 


 \begin{theorem}\label{th:consistency}
 Let $T$ be a random matrix continuously distributed on the set of matrices in $\K^{d\times r}$ and $\{T_j\}_{j=1}^n$ an associated i.i.d.~sample with $\Gamma^{(n)}$ being the corresponding limit of the iterative algorithm \eqref{eq:def Gamma}. Then $\Gamma^{(n)}$ converges almost surely towards some positive definite $\Gamma$, so that  $\Sigma:=\Gamma^{-1}$ satisfies 
 \begin{equation}\label{eq:final eq}
\Sigma=d\mathbb{E} \frac{TT^*}{\trace(T^*\Sigma^{-1} T)}.
\end{equation}
 \end{theorem}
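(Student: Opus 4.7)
The plan is to show: (i) an empirical fixed-point identity holds for each $\Gamma^{(n)}$; (ii) every subsequential limit of $\{\Gamma^{(n)}\}$ on the compact unit-trace simplex is positive definite; (iii) any such limit solves the population fixed-point equation, via the strong law of large numbers; (iv) this population fixed point is unique up to scaling, which combined with the trace normalization upgrades subsequential to full almost-sure convergence. Absolute continuity of $T$ makes the algebraic failure events in \eqref{it:2}--\eqref{it:4} null, so almost surely the sample $\{T_j\}_{j=1}^n$ satisfies all four hypotheses of Theorem \ref{theorem:Paul} for every sufficiently large $n$; thus $\Gamma^{(n)}\in\mathcal{P}_1$ is well-defined, and inverting $M(\Gamma^{(n)})=I_d$ yields
\begin{equation*}
(\Gamma^{(n)})^{-1}=\frac{d}{n}\sum_{j=1}^n\frac{T_jT_j^*}{\trace(T_j^*\Gamma^{(n)}T_j)}.
\end{equation*}
Extract a convergent subsequence $\Gamma^{(n_k)}\to\Gamma_\infty\succeq 0$ with $\trace(\Gamma_\infty)=1$.

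The crucial non-degeneracy step (ii) is argued by contradiction: suppose $L:=\ker\Gamma_\infty$ is a proper nonzero subspace and pick $x\in L$ with $\|x\|=1$. Pre- and post-multiplying the empirical identity (after rearranging back to the form $M(\Gamma^{(n_k)})=I_d$) by $x^*$ and $x$ gives
\begin{equation*}
1=\frac{d}{n_k}\sum_{j=1}^{n_k}\frac{\|T_j^*\Gamma^{(n_k)1/2}x\|^2}{\trace(T_j^*\Gamma^{(n_k)}T_j)},
\end{equation*}
with every summand in $[0,1]$ by comparing a single diagonal entry with the trace. By the strong law of large numbers, the proportion $\#\{j\le n_k:\range(T_j)\subseteq L\}/n_k\to\mathbb{P}(\range(T)\subseteq L)=0$, so these summands contribute $o(1)$ to the average. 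For the remaining indices the denominator stays bounded below while the numerator tends to $\|T_j^*\Gamma_\infty^{1/2}x\|^2=0$; a Glivenko--Cantelli-style uniform-convergence argument applied to the empirical average of these ratios shows their total contribution also vanishes, contradicting the left-hand side. Hence $\Gamma_\infty\succ 0$.

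Step (iii): positive-definiteness of $\Gamma_\infty$ implies that $F(\Gamma_\infty;T):=TT^*/\trace(T^*\Gamma_\infty T)$ is integrable (its trace equals one and its operator norm is bounded by $\lambda_{\min}(\Gamma_\infty)^{-1}$), so the strong law gives $\frac{1}{n_k}\sum_j F(\Gamma_\infty;T_j)\to\mathbb{E}F(\Gamma_\infty;T)$ almost surely. Combining this with Lipschitz continuity of $F(\cdot;T)$ on a neighbourhood of $\Gamma_\infty$, we may replace $\Gamma_\infty$ by $\Gamma^{(n_k)}$ inside the sum and pass to the limit in the empirical identity to obtain $\Gamma_\infty^{-1}=d\,\mathbb{E}F(\Gamma_\infty;T)$. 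Setting $\Sigma:=\Gamma_\infty^{-1}$ yields \eqref{eq:final eq}.

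For step (iv), a population analogue of Lemma \ref{lemma:M}, obtained by replacing $\frac{1}{n}\sum_j$ by $\mathbb{E}$ throughout its proof and using that no proper subspace contains $\range(T)$ with positive probability, shows that any two positive-definite solutions of \eqref{eq:final eq} differ by a positive scalar. The normalization $\trace(\Gamma^{(n)})=1$ then pins down a unique limit $\Gamma$ in $\mathcal{P}_1$, forcing the whole sequence to converge almost surely to $\Gamma$. The main technical obstacle is the non-degeneracy step (ii), where one must simultaneously control the vanishing mass of indices with $\range(T_j)\subseteq L$ and the near-vanishing denominators they induce, uniformly in $k$; this is the asymptotic counterpart of the finite-sample eigenvalue argument in Step 2 of the proof of Theorem \ref{theorem:Paul}.
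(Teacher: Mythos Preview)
Your overall strategy---compactness plus subsequential limits, then uniqueness of the population fixed point to upgrade to full convergence---is a natural alternative to the paper's route, but step~(ii) contains a genuine gap that is exactly the difficulty the paper's proof is designed to avoid.

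The problem is the ``Glivenko--Cantelli-style'' argument you invoke to show that the second group of summands vanishes. For indices $j$ with $\range(T_j)\not\subseteq L$ the denominator $\trace(T_j^*\Gamma^{(n_k)}T_j)$ is positive for each fixed $j$, but it is \emph{not} bounded below uniformly in $j$: the limiting value $\trace(T_j^*\Gamma_\infty T_j)$ can be arbitrarily small when $\range(T_j)$ is nearly contained in $L$, and such indices have positive asymptotic density. In effect you are applying a uniform law of large numbers to the class
\[
\Big\{\,T\;\longmapsto\;\frac{\|T^*\Gamma^{1/2}x\|^2}{\trace(T^*\Gamma T)}\ :\ \Gamma\in\overline{\mathcal P_1},\ x\in S^{d-1}\Big\}
\]
at the boundary of the positive-definite cone, where these functions are not equicontinuous in $(\Gamma,x)$ and none of the standard Glivenko--Cantelli hypotheses (bracketing, VC, monotone families in several parameters) are readily available. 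The paper's Lemma~\ref{lemma:1a} gives precisely this uniform convergence, but only over compact sets of \emph{positive definite} matrices; its proof rests on equicontinuity of $G(T,R)$ on such sets, which breaks down as $R$ becomes singular. So the lemma you would need is the one that is not on offer. (A smaller version of the same issue recurs in step~(iii): the SLLN for $\tfrac{1}{n_k}\sum_jF(\Gamma_\infty;T_j)$ needs a deterministic $\Gamma_\infty$, whereas yours is sample-dependent. This part \emph{is} fixable via a countable exhaustion $\mathcal C_\varepsilon:=\{\Gamma\in\mathcal P_1:\lambda_{\min}(\Gamma)\ge\varepsilon\}$ together with Lemma~\ref{lemma:1a}, but only after you already know $\Gamma_\infty\succ0$.)

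The paper circumvents the boundary altogether with a variational device: it introduces $h(\Gamma)=\trace(M(\Gamma)^2)$ and its empirical analogue $h_n$, notes that $M(R)=I$ forces $h(R)=d$ (the global minimum), and then applies the uniform convergence of Lemma~\ref{lemma:1a} only on a small compact neighbourhood $\mathcal C\subset\mathcal P_1$ of the population solution (which, after the WLOG reduction $\Sigma=dI$, is $\tfrac{1}{d}I$). Uniform closeness of $h_n$ to $h$ on $\mathcal C$ forces $h_n(\tfrac{1}{d}I)<h_n(R)$ for all $R\in\partial\mathcal C$, so $h_n$ has an interior critical point in $\mathcal C$; by Lemma~\ref{lemma:1b} any such critical point satisfies $M_n(R)=I$, and by Lemma~\ref{lemma:M} it must equal $\Gamma^{(n)}$. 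This traps $\Gamma^{(n)}$ inside an arbitrarily small $\mathcal C$ directly, without ever having to control empirical averages near singular $\Gamma$. Your steps~(i) and~(iv) are correct and match the paper's framework; what is missing is a mechanism to keep the argument away from the boundary, and that is exactly what the $h_n$/critical-point trick supplies.
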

As in \cite{Tyler:1987fk}, we observe that results of the previous section applied to a continuously distributed random matrix $T$ yield that \eqref{eq:final eq} has a solution $\Sigma$ among the symmetric positive definite matrices and is unique up to multiplication by a positive constant. 

For elliptical distributions, \eqref{eq:final eq} has a very special meaning. Here, we call a probability distribution on $\K^{d\times r}$ \emph{elliptical} if it has a density $f$ with respect to the standard volume element $dT$ on $\K^{d\times r}$ and 
\begin{equation*}
f(T) = |\det(\Sigma)|^{-1/2} g(\trace((T-\bar{T})^*\Sigma^{-1}(T-\bar{T}))), 
\end{equation*}
where $\Sigma\in\K^{d\times d}$ is hermitian positive definite, $\bar{T}\in\K^{d\times r}$, and $g$ is some nonnegative function not dependent on $\bar{T}$ and $\Sigma$ with $\int_{\K^{d\times r}} g(\trace(T^*T)) dT = 1$. For instance, the Gaussian random matrix in Example \ref{ex:Gaussian} is elliptically distributed. A direct computation yields that the matrix $\Sigma$ of an elliptically distributed random matrix $T$ with $\bar{T}=0$ satisfies \eqref{eq:final eq}. For simplicity, we shall restrict us to the case $\bar{T}=0$ and point out that general $\bar{T}$ can be handled in a similar fashion, see \cite{Tyler:1987fk} for $r=1$. 

Theorem \ref{th:consistency} directly implies the following:
\begin{corollary}
If $T$ is elliptically distributed with $\bar{T}=0$ and $\Sigma$ is a multiple of the identity, then, for any sample $\{T_j\}_{j=1}^n$, the associated matrices $\Gamma^{(n)}$, for $n\rightarrow \infty$, converge towards $\frac{1}{d}I$.   
\end{corollary}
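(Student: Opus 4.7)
The plan is to combine Theorem~\ref{th:consistency} with the fact (stated in the paragraph just before the corollary) that the population parameter $\Sigma$ of a centered elliptical distribution itself satisfies the fixed-point equation \eqref{eq:final eq}, together with the uniqueness of the solution of \eqref{eq:final eq} up to positive scalars.

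First I would invoke Theorem~\ref{th:consistency}: since $T$ is continuously distributed on $\K^{d\times r}$, the sequence $\Gamma^{(n)}$ converges almost surely to a positive definite matrix $\Gamma$, and its inverse $\Sigma_\infty := \Gamma^{-1}$ solves the fixed-point equation
\begin{equation*}
\Sigma_\infty = d\,\mathbb{E}\frac{TT^*}{\trace(T^*\Sigma_\infty^{-1}T)}.
\end{equation*}
Next I would recall that the elliptical parameter $\Sigma$ itself is a solution of the same equation. By the uniqueness statement recorded immediately after Theorem~\ref{th:consistency}, any two positive definite solutions of \eqref{eq:final eq} differ only by multiplication with a positive constant. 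Hence $\Sigma_\infty = \kappa\,\Sigma$ for some $\kappa>0$.

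Now I would feed in the hypothesis that $\Sigma$ is a multiple of the identity, say $\Sigma = c\,I_d$ for some $c>0$. Then $\Sigma_\infty = \kappa c\,I_d$, and consequently $\Gamma = (\kappa c)^{-1}I_d$. To pin down the scalar I would use the normalization built into the iterative scheme \eqref{eq:def Gamma}: each iterate satisfies $\trace(\Gamma_k)=1$, so by continuity $\trace(\Gamma^{(n)})=1$ for every $n$, and passing to the limit gives $\trace(\Gamma)=1$. Combined with $\Gamma = (\kappa c)^{-1}I_d$ this forces $(\kappa c)^{-1}d = 1$, hence $\Gamma = \tfrac{1}{d}I_d$, as claimed.

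There is really no obstacle here beyond bookkeeping: the serious work is done in Theorem~\ref{th:consistency} (convergence of $\Gamma^{(n)}$) and in the uniqueness-up-to-scaling statement for \eqref{eq:final eq}. The one point that requires a moment's care is verifying that the trace-one normalization is preserved in the limit, but this is immediate from the continuity of the trace on the cone of positive definite matrices. Thus the corollary follows as a short direct consequence of the results already established.
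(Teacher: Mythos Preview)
Your argument is correct and is exactly the unpacking of what the paper means by ``Theorem~\ref{th:consistency} directly implies the following'': you combine the almost-sure convergence of $\Gamma^{(n)}$ to a solution of \eqref{eq:final eq}, the fact that the elliptical $\Sigma$ is another solution, uniqueness up to scaling, and the trace-one normalization to pin down the constant. There is nothing to add or change.
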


 To verify Theorem \ref{th:consistency}, we follow the ideas in \cite{Tyler:1987fk}, where $r=1$ was considered, so we need some notation and two lemmas. Let us define
 \begin{equation*}
 M_n(\Gamma)=\frac{d}{n} \sum_{j=1}^n \frac{\Gamma^{1/2}T_j T^*_j\Gamma^{1/2}}{\trace(T^*_j \Gamma T_j)},\quad\quad M(\Gamma)= d\mathbb{E}\frac{\Gamma^{1/2}TT^*\Gamma^{1/2}}{\trace(T^*\Gamma T)},
 \end{equation*}
 and we denote $h_n(\Gamma):=\trace(M_n(\Gamma)^2)$ and $h(\Gamma):=\trace(M(\Gamma)^2)$.
 \begin{lemma}\label{lemma:1a}
 Let $\mathcal{C}\subset \K^{d\times d}$ be a compact set of positive definite matrices with $R\in \mathcal{C}$ implying $\trace(R^{-1})=1$ and $\trace(R)\leq K$ with some fixed $K>1$. Then 
 \begin{equation}\label{eq:strong}
 \sup_{R\in \mathcal{C}} |h_n(R)-h(R)|\rightarrow 0
 \end{equation}
holds almost surely.
 \end{lemma}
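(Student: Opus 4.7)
The plan is to recognize $h_n$ as a V-statistic of order two and upgrade pointwise convergence to uniform convergence via a standard equicontinuity plus compact-cover argument. Expanding the product $M_n(R)^2$ and taking the trace, one sees
\begin{equation*}
h_n(R)=\frac{d^2}{n^2}\sum_{j,k=1}^n \phi(T_j,T_k;R),\qquad
\phi(T,T';R):=\frac{\|T^*RT'\|_{HS}^2}{\trace(T^*RT)\,\trace(T'^*RT')},
\end{equation*}
and likewise $h(R)=d^2\,\mathbb{E}\,\phi(T,T';R)$ for two independent copies $T,T'$ of the underlying law. Cauchy--Schwarz applied to $A:=R^{1/2}T$ and $B:=R^{1/2}T'$ (so that the numerator is $\|A^*B\|_{HS}^2$ and the denominators are $\|A\|_{HS}^2\|B\|_{HS}^2$) gives $0\le \phi\le 1$, which makes all the relevant integrability questions trivial.

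The first step I would carry out is pointwise a.s.\ convergence. Separating the diagonal from the off-diagonal gives $h_n(R)=\frac{d^2}{n}\cdot\frac{1}{n}\sum_j\phi(T_j,T_j;R)+\frac{d^2(n-1)}{n}\cdot U_n(R)$, where $U_n(R)$ is the U-statistic of order two with kernel $\phi(\cdot,\cdot;R)$. The diagonal contribution is $O(1/n)$ since $\phi\le 1$, while Hoeffding's SLLN for bounded U-statistics yields $U_n(R)\to \mathbb{E}\,\phi(T,T';R)$ almost surely. Hence for each \emph{fixed} $R\in\mathcal C$, $h_n(R)\to h(R)$ almost surely.

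The upgrade to uniform convergence rests on showing that the maps $R\mapsto h_n(R)$ and $R\mapsto h(R)$ are Lipschitz on $\mathcal C$ with a deterministic constant $L$ independent of $n$. For this I exploit the crucial structural fact forced by $\mathcal C$: because $\trace(R^{-1})=1$ and $R^{-1}$ is positive definite of size $d\times d$, every eigenvalue of $R^{-1}$ lies in $(0,1]$, so $R\succeq I$; combined with $\trace(R)\le K$ this gives $I\preceq R\preceq KI$ uniformly on $\mathcal C$. Since $\phi(T,T';R)$ is scale-invariant in both $T$ and $T'$ (numerator and denominator are each homogeneous of degree $2$ in $T$ and in $T'$), we may normalize $\|T\|_{HS}=\|T'\|_{HS}=1$; then $\trace(T^*RT),\trace(T'^*RT')\in[1,K]$, so the denominator is bounded away from zero uniformly in $T,T'$, and differentiating the rational expression
\begin{equation*}
\phi(T,T';R)=\frac{\|T^*RT'\|_{HS}^2}{\trace(T^*RT)\,\trace(T'^*RT')}
\end{equation*}
in $R$ yields a uniform bound on $\|\nabla_R\phi\|$, depending only on $K$. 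Averaging preserves the Lipschitz constant, so both $h_n$ and $h$ are $L$-Lipschitz on $\mathcal C$ for the same deterministic $L$.

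Finally, compactness finishes the argument: fix $\varepsilon>0$, cover $\mathcal C$ by finitely many $\|\cdot\|$-balls of radius $\varepsilon/(3L)$ centered at $R_1,\ldots,R_N$, and let $\Omega_0$ be the almost-sure set on which $h_n(R_i)\to h(R_i)$ for every $i=1,\ldots,N$ (an intersection of finitely many full-measure sets). On $\Omega_0$, for any $R\in\mathcal C$ pick the nearest $R_i$; then
\begin{equation*}
|h_n(R)-h(R)|\le |h_n(R)-h_n(R_i)|+|h_n(R_i)-h(R_i)|+|h(R_i)-h(R)|\le \tfrac{2\varepsilon}{3}+|h_n(R_i)-h(R_i)|,
\end{equation*}
so $\limsup_n\sup_{R\in\mathcal C}|h_n(R)-h(R)|\le\varepsilon$ on $\Omega_0$, and letting $\varepsilon\downarrow 0$ along a countable sequence yields \eqref{eq:strong}. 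The main obstacle is the Lipschitz estimate of the third paragraph: one must verify that the $R$-derivative of $\phi$ stays bounded uniformly in $T,T'$, which relies essentially on the lower bound $R\succeq I$ implied by the trace-one constraint on $R^{-1}$, together with the scale invariance of $\phi$ in the data $T,T'$.
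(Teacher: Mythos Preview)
Your argument is correct. The expansion of $h_n$ as a V-statistic with kernel $\phi(T,T';R)=\|T^*RT'\|_{HS}^2/\big(\trace(T^*RT)\trace(T'^*RT')\big)$ is accurate, the bound $0\le\phi\le1$ via Cauchy--Schwarz is valid, and the crucial observation $I\preceq R\preceq KI$ on $\mathcal C$ (from $\trace(R^{-1})=1$ and $\trace(R)\le K$) indeed yields a uniform Lipschitz constant for $R\mapsto\phi(T,T';R)$ after exploiting the scale invariance in $T,T'$. The covering argument then goes through exactly as you state.

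The paper takes a slightly different and somewhat more elementary route. Rather than expanding $h_n$ as a second-order statistic, it stays at the level of $M_n(R)=\frac{d}{n}\sum_j G(T_j,R)$ with $G(T,R)=R^{1/2}TT^*R^{1/2}/\trace(T^*RT)$, which is an ordinary i.i.d.\ empirical mean of bounded matrices (indeed $\trace G=1$). One shows the equicontinuity of $G(\cdot,R)$ in $R$ uniformly in $T$---essentially the same computation you do for $\phi$, relying on the same spectral sandwich $I\preceq R\preceq KI$---and then the covering plus the \emph{ordinary} strong law give $\sup_{R\in\mathcal C}\|M_n(R)-M(R)\|_{HS}\to0$ a.s. Since $\trace(M_n)=\trace(M)=d$ keeps everything bounded, $|h_n-h|=|\trace(M_n^2-M^2)|\le 2d\,\|M_n-M\|_{HS}$ finishes. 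The advantage of the paper's route is that it avoids U-statistic machinery entirely; the advantage of yours is that it targets $h_n$ directly without the intermediate matrix convergence and makes the Lipschitz constant fully explicit. Both proofs rest on the same structural fact about $\mathcal C$ and the same compactness-cover step.
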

 \begin{lemma}\label{lemma:1b}
 The hermitian positive definite matrix $R\in\K^{d\times d}$ is a critical value of $h_n$ if and only if $M_n(R)=I$.
 \end{lemma}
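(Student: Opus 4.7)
The plan is to handle the two implications separately: the easy one falls out of the observation that $M_n(R)=I$ is the global minimizer of $h_n$, while the harder one comes from a direct first-order analysis of the critical-point equation.

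For the easy direction, I will first record the identity $\trace M_n(R)=\tfrac{d}{n}\sum_j \tfrac{\trace(R^{1/2}T_jT_j^*R^{1/2})}{\trace(T_j^*RT_j)}=d$, valid for every positive definite $R$. Combined with the Cauchy--Schwarz inequality $\trace(A^2)\geq (\trace A)^2/d$ for positive semi-definite $A$, this yields $h_n(R)\geq d$ with equality if and only if $M_n(R)$ is a scalar multiple of $I$, and the trace identity then pins the multiple to one. Hence $M_n(R)=I$ forces $R$ to minimize $h_n$ globally and, in particular, to be a critical point.

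For the harder direction, I will differentiate $h_n$. To avoid the awkward $R^{1/2}$ dependence, I first rewrite $h_n(R)=\trace\bigl((R\tilde M_n(R))^2\bigr)$ using the similarity of $R\tilde M_n(R)$ and $M_n(R)=R^{1/2}\tilde M_n(R)R^{1/2}$, where $\tilde M_n(R):=\tfrac{d}{n}\sum_j \tfrac{T_jT_j^*}{\trace(T_j^*RT_j)}$. A product-rule calculation then yields
\begin{equation*}
dh_n(V)=2\trace\Bigl[\bigl(\tilde M_n(R)\,R\,\tilde M_n(R)-\tfrac{d}{n}\sum_j \tfrac{\trace(R\tilde M_n(R)RT_jT_j^*)}{(\trace T_j^*RT_j)^2}\,T_jT_j^*\bigr)V\Bigr]
\end{equation*}
for every Hermitian $V$. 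The bracketed matrix is itself Hermitian, so requiring it to vanish and then sandwiching by $R^{1/2}$ on both sides turns the critical-point condition into the cleaner identity
\begin{equation*}
M_n(R)^2=\tfrac{d}{n}\sum_j \trace\bigl(P_j(R)M_n(R)\bigr)\,P_j(R),\qquad P_j(R):=\tfrac{R^{1/2}T_jT_j^*R^{1/2}}{\trace(T_j^*RT_j)},
\end{equation*}
with each $P_j(R)$ positive semi-definite of unit trace.

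To conclude $M_n(R)=I$, I would diagonalise $M_n(R)=\sum_i \lambda_i q_iq_i^*$ and set $p_{ij}:=q_i^*P_j(R)q_i\geq 0$ together with $q_{ik}:=\tfrac{d}{n}\sum_j p_{ij}p_{kj}$. The definitions immediately give $\lambda_i=\sum_k q_{ik}$, and the displayed critical-point equation reduces after projection to the scalar relations $\sum_k q_{ik}(\lambda_k-\lambda_i)=0$ for each $i$. Evaluating at an index $i^\ast\in\arg\min_i\lambda_i$ makes every summand nonnegative, forcing $q_{i^\ast k}=0$ whenever $\lambda_k>\lambda_{i^\ast}$. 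Translating this back through $p_{ij}=\|T_j^*R^{1/2}q_i\|^2/\trace(T_j^*RT_j)$ shows that $\{1,\ldots,n\}$ splits as $J^-\sqcup J^+$ so that each $\range(T_j)$ lies in one of two complementary proper subspaces of $\K^d$ depending on the block. The g-frame spanning hypothesis (in the strengthened form \eqref{it:2} of Theorem~\ref{theorem:Paul}) rules this out unless the eigenvalue splitting was trivial; combined with $\trace M_n(R)=d$ we then obtain $M_n(R)=I$.

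The main obstacle I expect is precisely this last spectral-to-geometric step: squeezing a subspace dichotomy out of the pointwise identity $\sum_k q_{ik}(\lambda_k-\lambda_i)=0$ and then closing with the g-frame hypothesis in the spirit of the argument in Theorem~\ref{theorem:Paul}. A minor subtlety along the way is the scale invariance $h_n(cR)=h_n(R)$, which automatically forces $\trace(G(R)R)=0$ for the gradient matrix $G(R)$, so ``critical point'' must be interpreted modulo a one-dimensional degeneracy that is harmlessly absorbed by the normalization $\trace M_n(R)=d$.
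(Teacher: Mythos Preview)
Your argument is correct. The easy direction and the first-order expansion match what the paper (via Tyler) does: both arrive at the critical-point identity
\[
M_n(R)^2=\frac{d}{n}\sum_j \trace\bigl(P_j(R)M_n(R)\bigr)\,P_j(R).
\]
Where you diverge is in extracting $M_n(R)=I$ from this identity. The paper invokes Kantorovich's inequality (following Tyler's $r=1$ proof): one multiplies the identity by $M_n^{-1}$, takes traces, and uses the Cauchy--Schwarz/Kantorovich bound $\trace(P_jM_n)\,\trace(P_jM_n^{-1})\geq(\trace P_j)^2=1$, whose equality case forces each $\range(R^{1/2}T_j)$ into a single eigenspace of $M_n$; the frame hypothesis then finishes. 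Your route replaces this with the elementary extremal-eigenvalue computation $\sum_k q_{ik}(\lambda_k-\lambda_i)=0$, which at the smallest eigenvalue gives the same subspace dichotomy directly. Your approach is slightly more self-contained (no named inequality to recall) and works uniformly in $r$ without having to revisit why the rank-one Kantorovich argument survives for matrix-valued $P_j$; the Kantorovich route is shorter once the inequality is in hand. In both cases the final step genuinely needs the stronger spanning condition~\eqref{it:2}, not merely the g-frame property~\eqref{it:1}; you flag this correctly, and in the context of Section~\ref{sec:III} it holds almost surely for the continuous samples considered there.
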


 \begin{proof}[Proof of Theorem \ref{th:consistency}]
Simple arithmetics yield that $\trace(M_n(R))=\trace(M(R))=d$ implies $d\leq h_n(R),h(R)\leq d^2$, for all positive definite matrices $R\in\K^{d\times d}$. Furthermore, $h_n(R)=d$ or $h(R)=d$ if and only if $M_n(R)=I$ or $M(R)=I$, respectively. 
 
 As mentioned above, \eqref{eq:final eq} has a solution $\Sigma$ among the symmetric positive definite matrices and is unique up to multiplication by a positive constant, see also \cite{Tyler:1987fk}. Without loss of generality, we can assume that $\Sigma=dI$, which implies $M(\Sigma^{-1})=M(\frac{1}{d}I)=I$. Choose $C$ as in Lemma \ref{lemma:1b} with $\frac{1}{d}I$ being contained in its interior. For all $R\in \mathcal{C}$ with $R\neq \frac{1}{d}I$, we must have $d=h(\frac{1}{d}I)<h(R)$. Since $h$ is continuous, Lemma \ref{lemma:1a} yields that, for any $R$ on the boundary of $\mathcal{C}$,  we have $h_n(\frac{1}{d}I)<h_n(R)$ with probability one if $n$ is sufficiently large.  
 
 Note that $M_n(\Gamma^{(n)})=I$, so that Lemma \ref{lemma:M} and Lemma \ref{lemma:1b} imply that $\Gamma^{(n)}$ is eventually contained in $\mathcal{C}$ for sufficiently large $n$. Since $\mathcal{C}$ can be chosen arbitrarily small, it follows that $\Gamma^{(n)}\rightarrow \frac{1}{d}I=\Sigma^{-1}$ almost surely, which concludes the proof. 
\end{proof}
It remains to prove the two Lemmas \ref{lemma:1a} and \ref{lemma:1b}.
\begin{proof}[Proof of Lemma \ref{lemma:1a}]
We follow \cite[Proof of Statement (3.2)]{Tyler:1987fk}: 
For $0\neq T\in\K^{d\times r}$, we define 
\begin{equation*}
G(T,R):=\frac{R^{1/2} TT^*R^{1/2}}{\trace(T^*RT)}.
\end{equation*}
As already mentioned in \cite{Tyler:1987fk} for $r=1$, $G$ is equicontinuous on $\mathcal{C}$ meaning that, for $\varepsilon>0$, there is $\delta_\varepsilon>0$ not dependent on $T\neq 0$ nor on $R_1,R_2\in\mathcal{C}$, such that $\|R_1-R_2\|<\delta_\varepsilon$ implies $\|G(T,R_1)-G(T,R_2)\|_{HS}<\varepsilon$. Next, the same covering argument for $\mathcal{C}$ as in \cite{Tyler:1987fk} used with the equicontinuity and the strong law of large numbers implies \eqref{eq:strong}. We omit the details.   
 \end{proof}

\begin{proof}[Proof of Lemma \ref{lemma:1b}]
We can simply follow the lines of \cite[Proof of Statement (3.3)]{Tyler:1987fk}, where $r=1$ is discussed. A first order expansion of $h_n$ with the frame property and Kantorovich's inequality yields Lemma \ref{lemma:1b}. No new ideas are involved when dealing with $r>1$, so we refer to \cite{Tyler:1987fk} for the details. 
\end{proof}

\section{Modifications and numerical experiments}
We have claimed that the additional property of equal norms in filter banks with perfect reconstruction can be beneficial  compared to the original filter bank. The present section is dedicated to support this statement by some modifications and numerical denoising experiments using wavelet filters applied to the Lena and Peppers images, cf.~Fig.~\ref{fig:org images}. 

It should be mentioned that PDE based methods seem to provide better denoising results in images than pure wavelet based schemes. Therefore, our use in wavelet based denoising is rather an example than the main application, for which we developed the theoretical scheme, and we only provide the root mean squared error (RMSE) but suppress the denoised images themselves. 
\begin{figure}
\subfigure[Lena]{\includegraphics[]{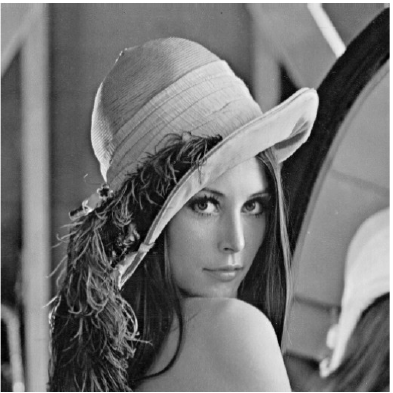}}\;\;
\subfigure[Peppers]{\includegraphics[]{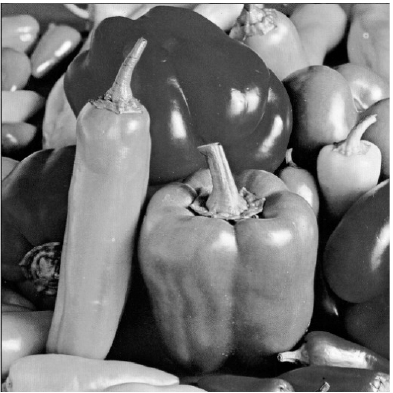}}
\caption{The two test signals, for which we perform denoising experiments.}\label{fig:org images}
\end{figure}

Given a collection of wavelet frame filters $\{T_j\}_{j=1}^n$, we observe that $\Gamma$ is the inverse of the frame operator of the reweighted filters $
\{c_j T_j\}_{j=1}^n$, where $c_j = \sqrt{\frac{d}{n\trace(T_j^*\Gamma T_j)}}$. Since each $T_j$ is supposed to be a filter, there are then fast algorithms to apply $\Gamma$ within the filter bank scheme, cf.~\cite{Wiesmeyr:2013fk}. Since it may not be clear how the premultiplication of $\Gamma^{1/2}$ changes the signal before the actual filters $\{T_j\}_{j=1}^n$ come into play in Fig.~\ref{fig:filtering}, we better modify the scheme and work with the filter bank designed in Fig.~\ref{fig:filteringNEW}. Note that the knowledge that $\Gamma$ is the inverse of a frame operator is essential to build this new filter bank.

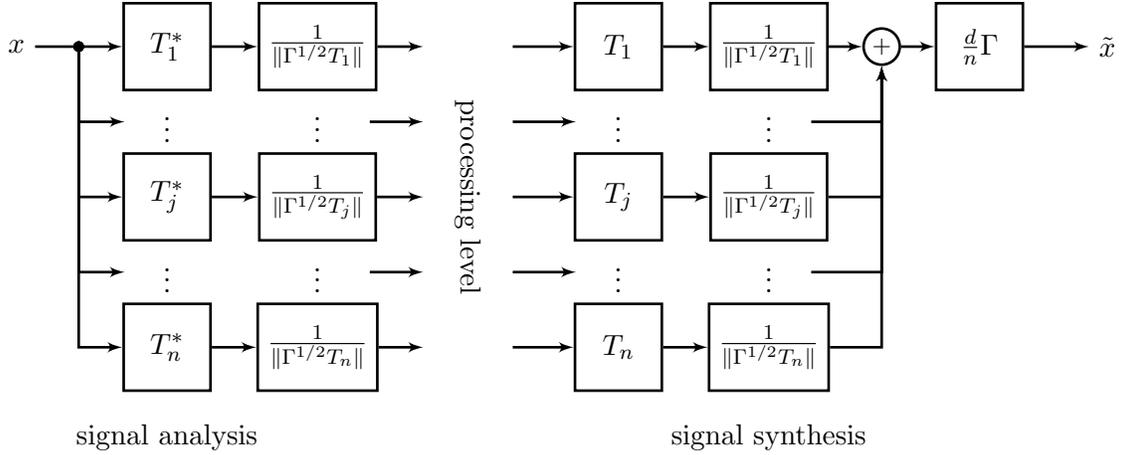
\begin{figure}
\begin{tikzpicture}[auto,>=latex']
    \tikzstyle{block} = [draw, shape=rectangle, minimum height=3em, minimum width=3em, node distance=2cm, line width=1pt]
    \tikzstyle{block2} = [draw, shape=rectangle, minimum height=3em, minimum width=3em, node distance=1.3cm, line width=1pt]
    \tikzstyle{phant} = [shape=rectangle, minimum height=3em, minimum width=3em, node distance=2cm, line width=1pt]
  \tikzstyle{phant2} = [shape=rectangle, minimum height=0em, minimum width=3em, node distance=1.2cm, line width=1pt]
    \tikzstyle{dots} = [shape=circle,minimum height=1em, minimum width=2em, node distance=1cm]
    \tikzstyle{sum} = [draw, shape=circle, node distance=1.5cm, line width=1pt, minimum width=1.25em]
    \tikzstyle{branch}=[fill,shape=circle,minimum size=4pt,inner sep=0pt]
    \node at (-1.5,0) (input) {$x$};
    \node at (-2.5,0) [block, right of=input] (T1) {$T_1^*$};
    \node [dots, below of=T1] (D1) {$\;\;\vdots\;\;$};
     \node [block, right of=T1] (H1) {$\frac{1}{\|\Gamma^{1/2}T_1\|}$};
       \node [dots, below of=H1] (D2) {$\quad\vdots\quad$};
         \node [block, below of=H1] (Hj) {$\frac{1}{\|\Gamma^{1/2}T_j\|}$};
         \node [dots, below of=Hj] (Hn) {$\quad\vdots\quad$};
           \node [block, below of=Hj] (Hnd) {$\frac{1}{\|\Gamma^{1/2}T_n\|}$};
             \node [phant, right of=H1] (P1) {};
                         \node [phant, right of=D2] (P2) {};
                                                  \node [phant, right of=Hn] (P3) {};
                                                  			    \node [phant, right of=Hn] (Pn) {};
			    \node [phant, right of=Hnd] (Pnd) {};
      \node [block, right of=P1] (TT1) {$T_1$};
        \node [block, right of=TT1] (HH1) {$\frac{1}{\|\Gamma^{1/2}T_1\|}$};
         
                    \node [dots, below of=HH1] (HH2) {$\;\;\vdots\;\;$};
    \node [sum, right of=HH1] (sum) {};
    \node [block2, right of=sum] (G2) {$\frac{d}{n}\Gamma$};

          \node [phant, right of=Hj] (Pj) {\begin{turn}{-90} processing  level\end{turn}};
    
    \node at (sum) (plus) {{\footnotesize$+$}};
     
    \node at (13,0) (output) {$\tilde{x}$};
    \path (input) -- coordinate (med) (T1);
    \node [block, below of=T1] (Tj) {$T_j^*$};
      \node [dots, below of=Tj] (Tn) {$\;\;\vdots\;\;$};
       \node [block, below of=Tj] (Tnd) {$T_n^*$};    
           \node [phant2, below of=Tnd] (END1) {signal analysis};     
                  \node [dots, below of=TT1] (TT2) {$\;\;\vdots\;\;$};    
       \node [block, below of=TT1] (TTj) {$T_j$};    
          \node [dots, below of=TTj] (TTn) {$\;\;\vdots\;\;$};    
       \node [block, below of=TTj] (TTnd) {$T_n$};    
        \node [block, right of=TTj] (HHj) {$\frac{1}{\|\Gamma^{1/2}T_j\|}$};
                            \node [dots, below of=HHj] (HHn) {$\;\;\vdots\;\;$};
                                    \node [block, right of=TTnd] (HHnd) {$\frac{1}{\|\Gamma^{1/2}T_n\|}$};
                                       \node [phant2, below of=HHnd] (END2) {signal synthesis};     
    \begin{scope}[line width=1pt]
         \draw[->] (input) -- (T1);
           \draw[->] (Tj) -- (Hj);
                \draw[->] (D2) -- (P2);
                     \draw[->] (H1) -- (P1);
                      \draw[->] (Hn) -- (Pn);
                      \draw[->] (Hnd) -- (Pnd);                     
     \draw[->] (Hj) -- (Pj);
    \draw[->] (Tnd) -- (Hnd);
            \draw[->] (G2) -- (output);
                   \draw[->] (P1) -- (TT1);    
                            \draw[->] (TT1) -- (HH1);    
                   
                     \draw[->] (P2) -- (TT2);
                   \draw[->] (Pj) -- (TTj);    
                                      \draw[->] (Pnd) -- (TTnd);    
                                                                            \draw[->] (Pn) -- (TTn);    
                           
           \draw[->] (med) node[branch] {} |- (Tj);
               \draw[->] (med) node[branch] {} |- (D1);
             \draw[->] (med) node[branch] {} |- (Tn);
                \draw[->] (med) node[branch] {} |- (Tnd);
                 \draw[->] (T1) -- (H1);
                 
                          \draw[->] (TTj) -- (HHj);
			 \draw[->] (TTnd) -- (HHnd);  
			 	 \draw[->] (HH1) -- (sum);    
                 \draw[->] (HHj) -| (sum);
                  \draw[->] (HH2) -| (sum);
                 \draw[->] (HHn) -| (sum);
                  \draw[->] (HHnd) -| (sum);
                          \draw[->] (sum) -- (G2);   
                 
    \end{scope}
\end{tikzpicture}
\caption{Modified analysis and synthesis scheme, in which $\{T_j\}_{j=1}^n$ get reweighted, and for those we postmultiply with its inverse frame operator. Since, $\Gamma$ is an inverse frame operator, its application can be performed using fast approximate methods, see \cite{Wiesmeyr:2013fk}. }\label{fig:filteringNEW}
\end{figure}

We shall compare our approach with three types of alternative reconstruction schemes. As shown in Fig.~\ref{fig:filtering b}, the alternatives are first taking the canonical dual as synthesis and, secondly, taking it as the analysis part in the filter bank. 
\begin{figure}
\subfigure[Original frame is used for analysis, and synthesis is computed by the canonical dual frame.]{
\includegraphics[height=.25\textwidth]{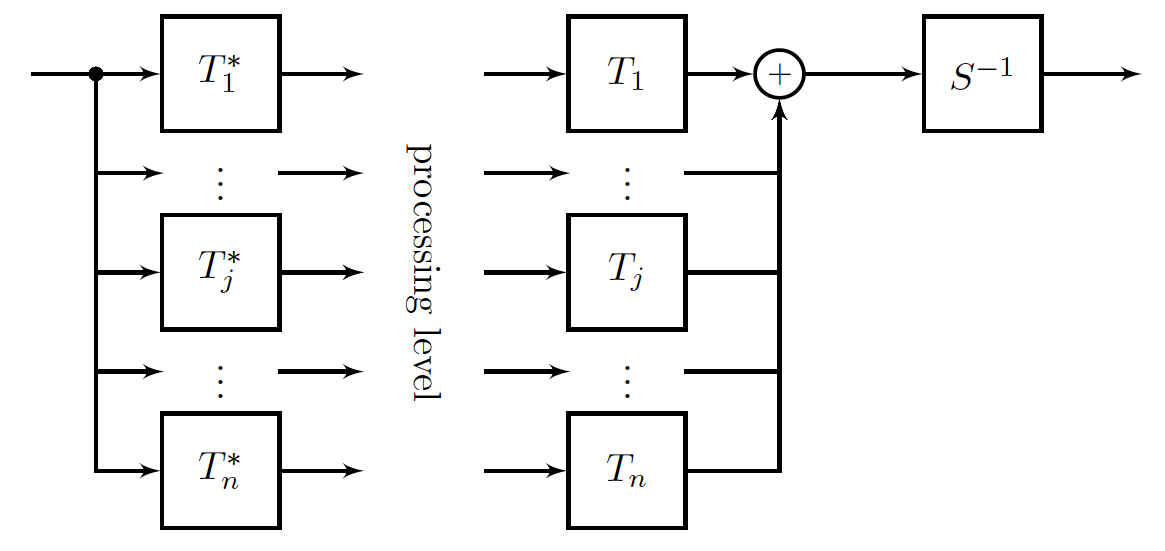}}\hfill
\subfigure[Original frame is used for analysis, and synthesis is derived through dual wavelet filters.]{
\includegraphics[height=.25\textwidth]{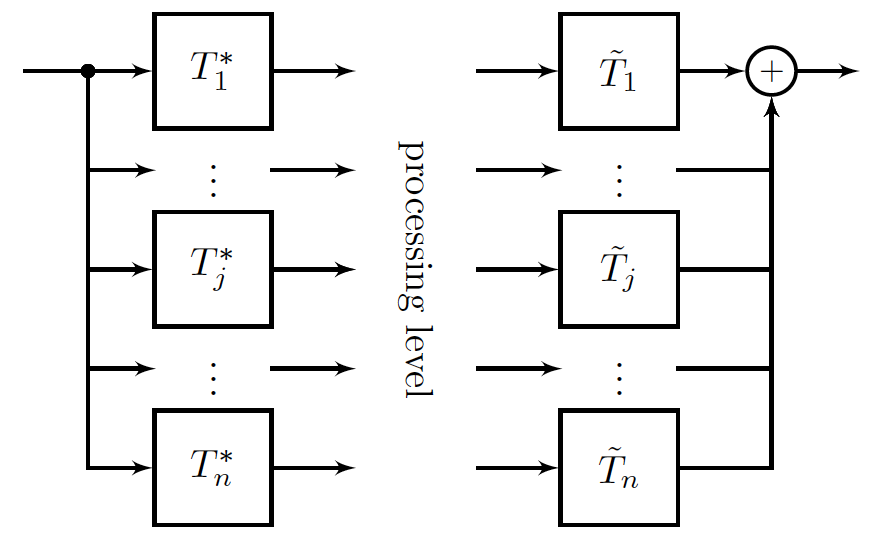}}

\subfigure[Original frame is replaced with its canonical tight frame. The filter bank yields perfect reconstruction but operators in each channel do not have equal norm.]{
\includegraphics[height=.25\textwidth]{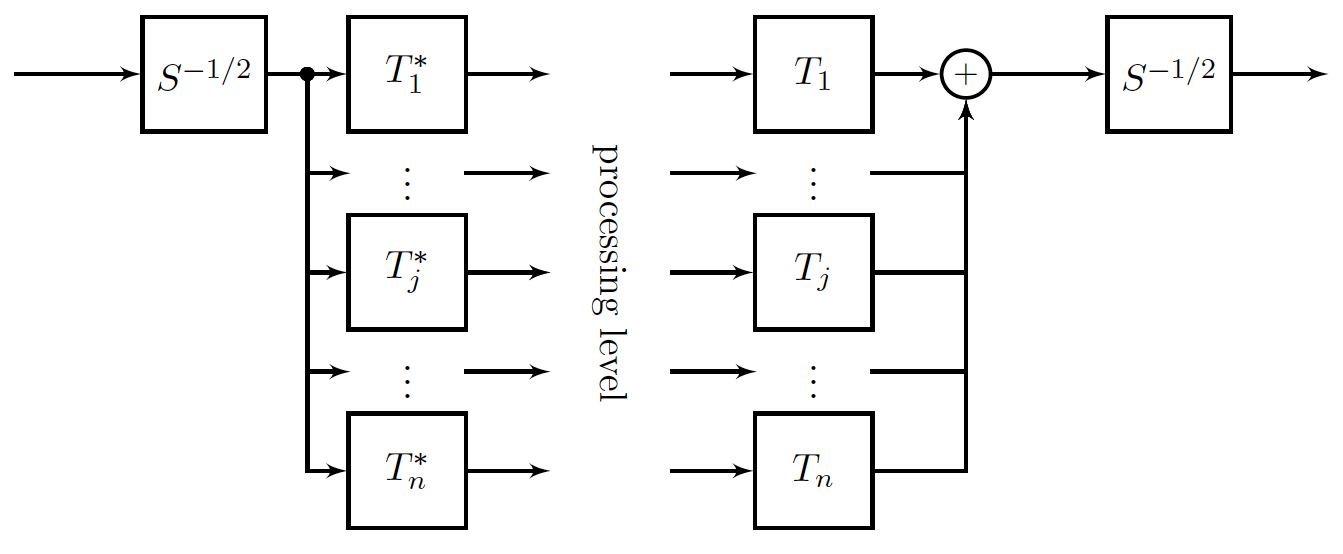}}

\caption{Given a set of linear operators $\{T_j\}_{j=1}^n$ associated to a wavelet system, there are several ways to build a filter bank with exact reconstruction. We used the canonical dual frame, a dual wavelet frame, and the canonical tight frame.}\label{fig:filtering b}
\end{figure}
For the third alternative, we use the primal frame and a dual that has some beneficial properties, namely also being induced by wavelets. Indeed, the given wavelet filters (or better convolution operators with down-sampling, see also \cite{Chebira:2011fk}) are chosen by the multiwavelet approach in \cite{Ehler:aa}, and the Lena image is corrupted by  additive Gaussian white noise of zero mean and three different variances, so that the signal to noise ratio is $10$, $25$, and $40$, respectively.

Tables \ref{table 2} and \ref{table 1} show that our filter bank design can improve on the denoising performance of wavelet filter banks. For low noise levels, our approach yields slightly higher RMSE values but with increasing noise levels the RMSE is clearly lower than for the three other methods we compare with. This behavior is consistent in Lena and Peppers. 

Since the construction of  $\Gamma$ only depends on the original wavelet frame and on the decomposition scale, it seems reasonable to ignore the computation time of $\Gamma$ when considering runtime comparisons. Nonetheless, it must be mentioned that our approach is much slower than the pairs of dual wavelets in \cite{Ehler:aa} because we need to apply twice an operator that is highly structured but not exactly a convolution. To avoid storage and runtime issues, we chopped the image into smaller pieces enabling us to perform all computations in matlab. The size of those pieces did not seem to have much effect as long as some lower threshold is avoided.

\begin{table}
\begin{tabular}{|c||c|c|c|c|}
\hline
SNR & our approach & canonical dual synthesis & canonical dual analysis & pair of dual wavelets\\
\hline
10 & 5.98 & 6.12 &  5.96  & 5.92 \\
25 & 9.94  & 10.87 & 10.86 & 10.25\\
40 & 11.98& 14.21& 13.79 & 13.11\\
\hline

\end{tabular}
\caption{Lena: root mean squared error (RMSE) for different denoising filter banks using soft-thresholding and varying signal to noise ratio (SNR). The multiwavelet frame operators $\{T_j\}_{j=1}^n$ correspond to the Laplace-$\Phi^v_2$ frame constructed in \cite{Ehler:aa}. The best threshold is computed by brute force methods. }\label{table 2}
\end{table}

\begin{table}
\begin{tabular}{|c||c|c|c|c|}
\hline
SNR & our approach & canonical dual synthesis & canonical dual analysis & pair of dual wavelets\\
\hline
10 &  6.28 & 6.17 & 6.09  & 6.01 \\
25 &  9.95  & 10.52  & 10.38 &  10.34\\
40 &  12.27 & 14.02 &  13.96 & 13.39 \\
\hline
\end{tabular}
\caption{Peppers: root mean squared error (RMSE) for different denoising filter banks using soft-thresholding and varying signal to noise ratio (SNR). The multiwavelet frame operators $\{T_j\}_{j=1}^n$ correspond to the Laplace-$\Phi^v_2$ frame constructed in \cite{Ehler:aa}. The best threshold is computed by brute force methods. }\label{table 1}
\end{table}


\section{Conclusions}
For some signal processing aspects, the most attractive filter bank schemes are those that provide perfect reconstruction, synthesis is the adjoint of the analysis scheme (so-called unitary filter banks), and filters have equal norm. Tight fusion frames, for instance, correspond to perfect reconstruction filter banks, in which each channel corresponds to an orthogonal projection, and it was verified in \cite{Kutyniok:2009aa} that robustness of tight fusion frames against distortions and erasures is maximized when the tight fusion frame has equal norm elements. Our aim was to turn a given filter bank into such more attractive schemes and preserving the essential features of the original filtering process. In terms of frames, we turned a given generalized frame into a tight g-frame with unit norm by rescaling and then applying the inverse square root of the new g-frame operator. Due to our special focus on filter banks, we started with a generalized frame consisting of convolution operators, hence, allowing for fast matrix vector multiplications. Through some iterative scheme, we constructed a generalized tight frame with unit norm, which induced a filter bank that preserved the convolution structure, hence, the fast algorithmic scheme, in each channel. Only one additional global pre- and postmultiplication with $\Gamma^{1/2}$ is necessary.  Naturally, the application of $\Gamma^{1/2}$ needs special care because it may be structured but not exactly a convolution operator. In our numerical experiments, we omitted the pre-multiplication but post-multiplied by $\Gamma$, which turned out to be the inverse of a frame operator and for which then fast computational schemes are available, cf.~\cite{Wiesmeyr:2013fk}. Other alternatives are special matrix storage management and chopping images into smaller pieces to reduce the matrix size to deal with. Nonetheless, this procedure is only applied once or twice globally and not in each channel, so that runtime does not grow with additional filters when compared to the original frame. 

We observed that the assumptions of our algorithm are satisfied by any sufficiently large sample drawn from any continuous distribution or drawn from random convolution operators. Fields of application are filter banks, in which the additional computation costs of the application of $\Gamma^{1/2}$ or $\Gamma$, respectively, can be tolerated, as for instance, when the number of channels is large or when computations are completely off-line. 

Our findings provide a tool to design new filter banks with improved properties on a theoretical level. Our numerical experiments seem to suggest improvements in wavelet based denoising but are far from being comprehensive. Further examples and wider applications in several fields go beyond the scope of the present manuscript but are needed to fully verify its usefulness from an application point of view. We hope that our theoretical findings provide the basis for its use in more elaborate signal processing methods.

\section*{Acknowledgements} 
The author has been funded by the Vienna Science and Technology Fund (WWTF) through project VRG12-009.


 \providecommand{\bysame}{\leavevmode\hbox to3em{\hrulefill}\thinspace}
\providecommand{\MR}{\relax\ifhmode\unskip\space\fi MR }
\providecommand{\MRhref}[2]{%
  \href{http://www.ams.org/mathscinet-getitem?mr=#1}{#2}
}
\providecommand{\href}[2]{#2}

\end{document}